\documentclass[11pt]{article}
\usepackage{amsmath,amssymb,stmaryrd,amsthm,MnSymbol}
\usepackage{bm}
\usepackage[a4paper, top=2.5cm, bottom=2.5cm, left=3cm, right=3cm]{geometry}
\usepackage{graphicx}
\usepackage{xcolor}
\usepackage[colorlinks=true,
            linkcolor=red,
            filecolor=blue,
            citecolor=blue,
            urlcolor=blue,]{hyperref}
\usepackage{cancel}
\usepackage{booktabs}
\usepackage{subcaption}
\usepackage{placeins}
\usepackage{float}

\newcommand{\pp}[2]{\frac{\partial #1}{\partial #2}} 
\newcommand{\dede}[2]{\frac{\delta #1}{\delta #2}}

\DeclareMathOperator{\diff}{d}

\newtheorem{proposition}{Proposition}
\newtheorem{remark}{Remark}
\newtheorem{lemma}{Lemma}

\begin{document}
\title{Energy conserving compatible space--time FEM for deterministic and stochastic rotating shallow water}
\author{Maneesh Kumar Singh\textsuperscript{1,2} and
        Colin J. Cotter\textsuperscript{2}\\[6pt]
  \small \textsuperscript{1}Department of Mathematics, SRM Institute of Science
  and Technology, Kattankulathur, India\\
  \small \textsuperscript{2}Department of Mathematics, Imperial College London,
  London SW7 2AZ, UK}
\date{}
\maketitle

\begin{abstract}
  We present a fully discrete energy conserving discretisation of the rotating shallow
  water equations with arbitrary order in space and time. A space time finite element
  discretisation is used, built from compatible finite element methods in space
  and continuous Petrov-Galerkin schemes in time. Projected auxiliary variables for
  Hamiltonian derivatives are necessary for exact energy conservation. We provide
  an iterative solver approach based on monolithic patch smoothers, which allows
  us to efficiently converge a Newton method sufficiently that energy errors due
  to the solver are very small. We extend this construction to the energy conserving
  {Stochastic Forcing by Lie Transport} formulation. Then we demonstrate the
  convergence, conservation and robustness in numerical tests.
\end{abstract}


\section{Introduction}\label{sec:intro}

Energy conserving spatial discretisations for geophysical fluid dynamics have been 
considered by a number of researchers. Even though atmosphere and ocean
models combine the fundamentally energy conserving inviscid fluid dynamics with
dissipation and source terms, it can be argued that maintaining energy conservation
within the inviscid component of the model ensures a reliable representation of
transfers of energy between different components and scales of kinetic and potential
energy in the model, in the light of important geophysical balances, such as hydrostatic
and geostrophic balance, where they dominate \cite{cullen2007modelling}. Energy
conservation is an important consideration for climate models \cite{jablonowski2011pros},
and has been demonstrated to be important for good representation of backscatter
in two-dimensional turbulence models \cite{thuburn2014cascades,natale2017scale}. Hence,
it is fruitful to understand how energy conservation can be achieved in spatial discretisations of geophysical fluid dynamics.

Interest in energy conserving discretisations for geophysical fluid dynamics
has usually been in connection with conservation of other quantities such
as mass and potential enstrophy. All of these conserved quantities can be explained
by deriving the equations of motion from Hamilton's principle under the assumption
that fluid particles are transported by a time dependent, invertible, 
Lagrangian flow map. Whilst mass conservation is built into this construction by 
assumption, energy conservation arises from symmetry under translation in time,
whilst enstrophy is one of infinitely many conserved quantities arising from
symmetry under relabelling of Lagrangian particles. A general recipe for how to
translate this derivation to Eulerian coordinates using reduction by particle
labelling symmetry was given in \cite{holm1998euler,holm1999euler}. As described there, this results in an equivalent Poisson bracket formulation, with energy conservation arising from antisymmetry of the bracket, and enstrophy and mass conservation arising as
``Casimirs" of the bracket (quantities that always cause the bracket to vanish when inserted). Any energy conserving scheme amounts
to defining a discrete bracket, and mass and enstrophy conservation can only be 
maintained if the discrete bracket has those quantities as Casimirs.

The story of energy conserving discretisations for geophysical fluid dynamics begins with pioneering and influential work on staggered ``C-grid"
finite difference methods by Sadourny and Arakawa. Sadourny considered 
schemes that conserved either energy or enstrophy\footnote{Mass conservation is always straightforward, and does not require a bracket structure to preserve it, so we do not discuss it any further here.}, Arakawa and Lamb
showed how to conserve both \cite{arakawa1981potential}. Since enstrophy cascades
to small scales in two or three dimensions, it is actually undesirable to conserve
it, and so Sadourny recommended to use the energy conserving scheme with an additional
enstrophy dissipating term which also has an interpretation as an eddy parameterisation 
\cite{sadourny1985parameterization}. Arakawa and Hsu also considered schemes that conserve energy and dissipate enstrophy \cite{arakawa1990energy}. Following work by Sadourny, these C-grid schemes were extended to arbitrary cell shapes to achieve
quasi-uniform coverage of the sphere and to allow for local mesh refinement and non-orthogonal grids, whilst considering linear geostrophic balance and consistent potential vorticity transport \cite{thuburn2009numerical,ringler2010unified,thuburn2014mimetic}.
This work was strongly informed by the discrete exterior calculus \cite{hirani,thuburn2012framework}.

The use of Poisson brackets to produce energy conserving equations was proposed by
Morrison \cite{morrison1981hamiltonian,morrison1998}, and in the context of geophysical fluid dynamics by Salmon \cite{salmon2004poisson}, with more recent work by \cite{dubos2014equations,eldred2017total}. A key difficulty is that, with 
the exception of the Zeitlin spectral truncations for incompressible flows \cite{zeitlin1991finite,zeitlin2004self}, the Jacobi identity does not survive 
discretisation. However, the preservation of energy only relies upon antisymmetry,
and it is possible to maintain the Casimir property for total vorticity and potential
enstrophy; indeed the original Arakawa and Lamb scheme can be interpreted this way.
\cite{salmon2005general} proposed to extend this idea to Nambu brackets to obtain
further conserved quantities in more general discretisations; this was followed up by
\cite{gassmann2008towards,gassmann2013global,nevir2009energy,sommer2009conservative}.
The bracket approach was extended to compatible finite elements (discretisations built from the finite element exterior calculus) to 
produce energy-enstrophy conserving schemes for the rotating shallow water equations in \cite{mcrae2014energy}.
An energy conserving/enstrophy dissipating scheme was demonstrated in \cite{bauer2018energy}, and upwinding of density, layer depth and 
temperature were incorporated into energy conserving schemes in \cite{wimmer2020energy,wimmer2021energy}. A related approach based on mimetic spectral
elements has been followed in \cite{lee2018mixed,lee2018discrete,lee2020mixed},
with a general survey in \cite{cotter2023compatible}.

An alternative approach to deriving discrete fluid models was proposed by \cite{pavlov2011structure,gawlik2011geometric}, using the Koopman operator view
of Lagrangian flow maps, which embeds a representation of these maps in the group of general linear maps from a function space to itself. This approach allows to translate the Hamilton's principle derivation of fluid mechanics to the discrete level by 
restricting that function space to a finite dimensional subspace. Unfortunately,
the group structure of the embedding is not preserved, requiring nonholonomic 
constraints to be introduced, which seems to remove the possibility of enstrophy
conservation. However, energy conservation survives to the spatially discrete
level, and even fully discrete variational integrators are possible. Originally
formulated in the framework of discrete exterior calculus, this approach was 
extended to finite element exterior calculus by \cite{natale2018variational,gawlik2021variational}.

To date, there has been relatively little consideration of energy conserving
time integrators to complement these Eulerian energy conserving spatial
discretisations. This is partially because these time integrators are 
implicit, and the scalable solution of the resulting sparse nonlinear
systems has not been addressed. Historically, conservative time integration
methods have focussed mostly on symplectic and variational integrators \cite{hairer1999numerical,leimkuhler2004simulating}. However, the spatial
discretisations discussed above do not have a Jacobi identity, so there is
no symplectic structure to preserve. The Koopman operator approach allow
variational integrators but it seems that there are issues with the modified
Hamiltonian in this function space setting; in \cite{gawlik2021variational}
an energy conserving time discretisation allowed stability for larger timesteps
than the variational time discretisation. Hence, we focus on energy conserving
time discretisations in this work.

Gauss-Legendre implicit Runge-Kutta methods have the property that
they conserve all quadratic invariants of the ODE, so if the energy is
quadratic, these time integrators will conserve it \cite[Theorem
  2.1]{hairer1999numerical}.  When the energy is not quadratic, more
bespoke methods are required. Discrete gradient methods are built from
approximations of the gradient of the Hamiltonian that have a finite
directional derivative property
\cite{gonzalez1996time,mclachlan1999geometric}, with higher order
extensions provided in
\cite{mclaren2004integral,mclaren2003bootstrapping,norton2013projection,eidnes2022order}
and references therein. Another approach to energy conservation is the
averaged vector field method
\cite{quispel2008new,hairer2010energy}. This was extended to energy
conserving systems with state dependent structure matrix (such as the
geophysical fluid dynamics discretisations under consideration here)
in \cite{cohen2011linear}; their lowest order example time averages
the gradient of the Hamiltonian before being multiplied by the
structure matrix evaluated on the average of the state over the two
time levels. Their higher order schemes extend this by projecting the
gradient of the Hamiltonian into a polynomial space before using this
projected gradient in a Gauss-Legendre collocation scheme. Yet another
approach is the continuous Petrov-Galerkin in time (cPG) method, a
time finite element method that uses degree $k+1$ polynomials for the
solution, and degree $k$ polynomials for the test space (hence
Petrov-Galerkin)
\cite{schieweck2010stable,ern2021}. \cite{egger2021energy} observed
that cPG schemes yield discrete gradient methods and averaged vector
field methods after approximating the integrals using numerical
quadrature. The cPG method naturally conserves quadratic invariants of
the continuous time system.  \cite{giesselmann2025energy} showed that
cPG schemes can conserve nonquadratic Hamiltonians using projected
gradients, noting that this recovers the scheme of
\cite{cohen2011linear} after using the Gauss-Legendre quadrature rule
to approximate the time integral in the Galerkin formulation (but the
integrals implementing the Hamiltonian projection must be
exact). \cite{andrew_farrell_sisc25} proposed a general framework for
enforcing conserved nonquadratic quantities at the time discrete level
by introducing auxiliary variables which are the time projection of
the derivatives of the conserved quantities. In the case of energy
conservation, these variables are the projected gradients of the
Hamiltonian of \cite{cohen2011linear}.

To make this cPG scheme with projections (PcPG) concrete, assume that after spatial
discretisation of a partial differential equation (PDE) by finite element methods
using a finite element space $Z$, we have the system for $z\in Z$ such that
\begin{equation}\label{eq:intro-ah}
  \left\langle \mathrm{w}, z_t \right\rangle
  + a_h\!\left(z;\, \mathrm{w},\, \dede{H}{z}\right) = 0,
  \quad \forall \mathrm{w} \in Z,
\end{equation}
where the variational derivative $\dede{H}{z}$ of the Hamiltonian $H$
is defined by
\begin{equation}
\left\langle w,\dede{H}{z}\right\rangle
= \lim_{\epsilon\to 0}\frac{H(z+\epsilon w)-H(z)}{\epsilon}.
\end{equation}
Here, $a_h$ antisymmetric and linear in its last two arguments, but
can depend arbitrarily on $z$ in the first argument. This ensures that
the
energy $H$ satisfies $\dot{H} = -a_h(z; \dede{H}{z}, \dede{H}{z}) = 0$. 
To obtain
a cPG scheme, we introduce polynomial spaces $\mathbb{P}_k$ of degree $k$ on the time 
interval $[t_n,t_{n+1}]$. The standard cPG scheme seeks $z \in Z\times \mathbb{P}_k$
with $z(t_n)=z_n$, such that 
\begin{equation}\label{eq:cPG}
\int_{t_n}^{t_{n+1}}
\left\langle \mathrm{w}, z_t \right\rangle
  + a_h\!\left(z;\, \mathrm{w},\, \dede{H}{z}\right)
  \diff t = 0, \quad \forall \mathrm{w} \in Z\times \mathbb{P}_{k-1},
\end{equation}
before taking $z^{n+1}=z(t_{n+1})$. When $a_h(z;v,w)$ is independent of the
first argument (corresponding to state independent structure matrix), this scheme conserves energy. For the general case (corresponding to state dependent structure
matrix), we must apply a projection following \cite{cohen2011linear}, 
producing the modified system
\begin{equation}\label{eq:PcPG}
\int_{t_n}^{t_{n+1}}
\underbrace{\left\langle \mathrm{w}, z_t \right\rangle}_{\star}
  + a_h\!\left(z;\, \mathrm{w},\, \mathbb{P}\dede{H}{z}\right)
  \diff t = 0, \quad \forall \mathrm{w} \in Z\times \mathbb{P}_{k-1},
\end{equation}
where we have defined the projection $\mathbb{P}\dede{H}{z} \in Z\times \mathbb{P}_{k-1}$ by
\begin{equation}
\label{eq:P}
\int_{t_n}^{t_{n+1}}
\left\langle w, \mathbb{P}\dede{H}{z} - \dede{H}{z}\right\rangle\diff t =0,\quad 
\forall w\in Z\times \mathbb{P}_{k-1},
\end{equation}
i.e. the $L^2$ space-time projection of $\dede{H}{z}$ into $Z\times \mathbb{P}_{k-1}$.
We can then verify energy conservation by direct computation,
\begin{align}
  \label{eq:H discrete}
H^{n+1} - H^n &= \int_{t_n}^{t_{n+1}} \dot{H}(z) \diff t, \\
&= \int_{t_n}^{t_{n+1}} \left\langle \dede{H}{z}, z_t \right\rangle \diff t, \\
\mbox{[use $w=z_t$ in \eqref{eq:P}]} &= \int_{t_n}^{t_{n+1}}
\left\langle \mathbb{P}\dede{H}{z}, z_t \right\rangle\diff t, \\
\mbox{[use $w=z_t$ in \eqref{eq:PcPG}]} &= -\int_{t_n}^{t_{n+1}} a_h\left(z; \mathbb{P}\dede{H}{z},\mathbb{P}\dede{H}{z}
\right)\diff t = 0,
\end{align}
after noting that $z_t\in Z\times \mathbb{P}_{k-1}$ and hence is a substitution for
\eqref{eq:P} and \eqref{eq:PcPG}. Finally, we note that nothing changes
in this calculation if we replace the time integral of $a_h$ in \eqref{eq:PcPG}
with a quadrature rule,
\begin{equation}
\int_{t_n}^{t_{n+1}} a_h(z;v,w) \diff t
\mapsto A_h(z;v,w) = \sum_{g=1}^{n_g} \rho_g a_h(z(t_g); v(t_g),w(t_g)),
\end{equation}
where $\{\rho_g\}_{g=1}^{n_g}$ and $\{t_g\}_{g=1}^{n_g}$ are quadrature
weights and quadrature points respectively,
since $A_h$ is still antisymmetric. When the Gauss-Legendre quadrature rule 
is used, we can use it without error in the time derivative term marked
with a star in \eqref{eq:PcPG}, and we have recovered the energy conserving collocation scheme of \cite{cohen2011linear}.

In this paper, we apply the PcPG scheme to a compatible finite element discretisation
of the rotating shallow water equations of the sphere. Previously, the scheme
of \cite{cohen2011linear} was applied to a similar discretisation in \cite{wimmer2020energy}; \cite{eldred2019quasi} having previously introduced this approach to the thermal shallow water equations using a related spline-based spatial discretisation. However, at that time we did not have access to scalable
iterative solver to solve the resulting nonlinear algebraic systems. We had to 
resort to a fixed point method using a solver for the linearisation of the equations
about a state of rest (such as is used in semi implicit time schemes). We were unable 
to converge the iteration to drive energy conservation towards the boundaries of machine precision. Since then, the work of \cite{Farrell2021,Amin2025} has enabled robust automation of additive Schwarz patch smoothers, which have been applied to implicit timestepping for compatible finite element discretisations in \cite{CotterShipton2023,bauer2025fully}. The automation of these schemes in Firedrake
\cite{farrell2021irksome,andrews2026irksome}.

Recently, we have seen the development of stochastic parameterisations of 
unresolved motions in geophysical fluid models that concern themselves with 
conservation. In the transport under location
uncertainty \cite{memin2014fluid, resseguier2017geophysical,brecht_JAMES}, energy
is preserved in expectation. Another approach, Stochastic
Advection by Lie Transport (SALT) \cite{holm2015variational, cotter2019numerically,holm2026geometry},
provides a stochastic version of the Kelvin circulation theorem.
A third approach, Stochastic Forcing by Lie Transport
(SFLT) \cite{holm2021stochastic, holmsinghsteet2026}, conserves energy by
inserting the stochastic transport into the Poisson bracket. A SFLT stochastic parameterisation was investigated in the context of the 3D primitive equations in \cite{hupatching2023variational}. An energy conserving discretisation of the SFLT extension of the
Camassa--Holm equation in \cite{holmsinghsteet2026}. The Camassa-Holm
equation has a conserved quadratic energy which is also conserved by the SFLT extension of it. The implicit midpoint rule conserves all quadratic invariants, and this was used to produce an energy conserving scheme in \cite{holmsinghsteet2026}, where it was beneficial for long time solutions, to isolate differences between SFLT and SALT in that setting.
Hence, it is interesting to see if energy conserving discretisations can be extended to SFLT models with nonquadratic energy, such as the rotating 
shallow water equations on the sphere. 

The rest of this paper is organised as follows.
Section~\ref{sec:discretemethod} gives the spatial discretisation, establishes
antisymmetry and introduces the PcPG time
discretisation and the quadrature required, states the fully discrete scheme, and
extends it to the stochastic case. Section~\ref{sec:numerics} presents numerical
results for analytic and non-analytic test cases, in both the deterministic and
the stochastic setting. Section~\ref{sec:discussion} discusses the results and ongoing work.

\section{Numerical method}\label{sec:discretemethod}
We consider the rotating shallow water equations in \textit{vector-invariant} form,
\begin{align}
  P_S\left(\pp{\bm{u}}{t} + \bm{u}^\perp(\nabla^{\perp} \cdot \bm{u}+ f) + \nabla\left(\frac{|\bm{u}|^2}{2}
  + g(D+b)\right)\right) & = 0, \label{eq:swe-u}\\
  \pp{D}{t} + \nabla\cdot(\bm{u}D) & = 0, \label{eq:swe-D}
\end{align}
where $\bm{u}$ is the velocity vector field tangential to the sphere, $P_S$ is the orthogonal projection into that tangent space, $D$ the layer depth, $b$ the height at the bottom
of the fluid layer, $f$ the Coriolis parameter, $g$ the acceleration due to
gravity, $\bm{u}^{\perp} = \hat{\bm{k}}\times \bm{u}$, $\hat{\bm{k}}$ the outward
pointing normal to the surface (e.g.\ on the sphere), and $\nabla$ the restriction
of the gradient tangential to the surface.

For smooth solutions, \eqref{eq:swe-u}--\eqref{eq:swe-D} conserve the energy
\begin{equation}\label{eq:H}
  H = \int_\Omega \frac{D|\bm{u}|^2}{2} + gD\left(\frac{D}{2} + b\right) \diff x .
\end{equation}
We describe the energy conserving spatial discretisation in Section \ref{sec:spatial},
and the energy conserving time discretisation in Section \ref{sec:time}.
\subsection{Spatial discretisation}
\label{sec:spatial}
The compatible finite element spatial discretisation follows the upwinded energy conserving scheme of \cite{wimmer2020energy} (following a similar scheme for
the incompressible Euler equations introduced in \cite{natale2018variational})
which avoids the need for an auxiliary potential vorticity variable as was used
in \cite{mcrae2014energy}. However, we do not use the layer depth upwinding of 
\cite{wimmer2020energy}, instead treating the continuity equation as in the same way as \cite{mcrae2014energy}. This is just a convenient choice for us and the
PcPG time scheme could have equally been applied to the schemes presented
in these other works.

We triangulate the sphere by an icosahedral mesh $\mathcal{T}_h$, refined by uniform subdivision, in which each refinement splits every triangle into four. A piecewise polynomial
approximation of the sphere is constructed by moving all of the Lagrange interpolation
points of each triangle radially out to the sphere surface.
On $\mathcal{T}_h$ we take $V\subset H(\mathrm{div})$ and $Q\subset L^2$ to be a compatible pair with $\nabla\cdot V \subseteq Q$, here either the $\mathrm{BDM}_r$--$\mathrm{DG}_{r-1}$ or the $\mathrm{BDFM}_r$--$\mathrm{DG}_{r-1}$ spaces. These $H(\mathrm{div})$ spaces
are constrained to be tangent to the mesh elements by construction, so the projection
$P_S$ is naturally taken care of in the variational formulation.

The variational derivatives $(\dede{H}{\bm{u}},\dede{H}{D})\in V\times Q$
are defined so that
\begin{equation}\label{eq:var-der}
\left\langle \bm{w}, \dede{H}{\bm{u}} - D\bm{u} \right\rangle = 0, \quad \forall \bm{w}\in V,
\quad 
\left\langle \phi, \dede{H}{D} - \left(\frac{|\bm{u}|^2}{2}+g(D+b)\right) \right\rangle = 0, \quad\forall \phi\in Q.
\end{equation}

The vorticity $\nabla^\perp\!\cdot \bm{u}$ is not defined for $\bm{u}\in V$, so we
adopt the upwinded vector-invariant formulation of \cite{natale2018variational},
and seek $(\bm{u},D)\in V\times Q$ such that
\begin{align}
  \langle \bm{w}, \bm{u}_t \rangle
  - \left\langle \nabla^\perp_h\left(\frac{\bm{w}}{D}\cdot{\dede{H}{\bm{u}}}^\perp\right),
  \bm{u} \right\rangle
  + \left\llangle 2\left\{\bm{n}^\perp\left(\frac{\bm{w}}{D}\cdot{\dede{H}{\bm{u}}}^\perp\right)\right\},
  \tilde{\bm{u}} \right\rrangle \notag\\
  \qquad
  + \left\langle \bm{w}/D, {\dede{H}{\bm{u}}}^\perp f \right\rangle
  - \left\langle \nabla\cdot \bm{w}, \dede{H}{D} \right\rangle
  & = 0, \quad \forall \bm{w} \in V,
  \label{eq:weak u}\\
  \left\langle \phi, D_t + \nabla\cdot \dede{H}{\bm{u}} \right\rangle & = 0,
  \quad \forall \phi \in Q,
  \label{eq:weak D}
\end{align}
where $\nabla^\perp_h$ is the element-wise broken derivative,
$\llangle\cdot,\cdot\rrangle$ integrates over all mesh facets, and $\tilde{\bm{u}}$
is the upwind value of $\bm{u}$. The two sides of each interior facet are labelled
$+$ and $-$ with $\bm{n}^- = -\bm{n}^+$, and $\{a\} = \tfrac12(a^+ + a^-)$ denotes
the facet average.

Writing $z=(\bm{u},D)$, $Z = V\times Q$, 
we observe that the system \eqref{eq:weak u}--\eqref{eq:weak D} takes the form of
\eqref{eq:ah-def},
where, for $\mathrm{w}=(\bm{w},\phi)$ and $\mathrm{v}=(\bm{v},\psi)$ in $Z$,
\begin{multline}\label{eq:ah-def}
  a_h(z;\,\mathrm{w},\mathrm{v})
  := - \left\langle \nabla^\perp_h\left(\frac{\bm{w}}{D}\cdot \bm{v}^\perp\right),
     \bm{u} \right\rangle
   + \left\llangle 2\left\{\bm{n}^\perp\left(\frac{\bm{w}}{D}\cdot \bm{v}^\perp\right)\right\},
     \tilde{\bm{u}} \right\rrangle \\
   + \left\langle f,\, \frac{\bm{w}}{D}\cdot \bm{v}^\perp \right\rangle
   - \left\langle \nabla\cdot \bm{w},\, \psi \right\rangle
   + \left\langle \phi,\, \nabla\cdot \bm{v} \right\rangle.
\end{multline}
The following lemma verifies that $a_h$ is antisymmetric.
\begin{lemma}[Antisymmetry of $a_h$]\label{lem:antisym}
For all $z$ and all $\mathrm{w},\mathrm{v}\in Z$,
\begin{equation}\label{eq:ah-antisym}
  a_h(z;\,\mathrm{w},\mathrm{v}) = -\,a_h(z;\,\mathrm{v},\mathrm{w}).
\end{equation}
\end{lemma}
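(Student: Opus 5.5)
The plan is to split $a_h$ from \eqref{eq:ah-def} into two groups of terms and check antisymmetry for each group separately, with $z=(\bm{u},D)$ held fixed throughout. The first group consists of the three terms depending on the velocity pairing $\bm{w},\bm{v}$ only through the scalar
\[
  s(\bm{w},\bm{v}) := \frac{\bm{w}}{D}\cdot\bm{v}^\perp .
\]
The second group consists of the two divergence terms coupling $(\bm{w},\phi)$ and $(\bm{v},\psi)$.

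For the first group, the key observation is that $s$ is pointwise antisymmetric. Since $\bm{v}^\perp=\hat{\bm{k}}\times\bm{v}$, we have
\[
  \bm{w}\cdot\bm{v}^\perp=\hat{\bm{k}}\cdot(\bm{v}\times\bm{w})=-\bm{v}\cdot\bm{w}^\perp ,
\]
so $s(\bm{w},\bm{v})=-s(\bm{v},\bm{w})$ at every point. The identity also holds on each side of every facet, where the traces of $\bm{w},\bm{v},D$ are taken from that side. Each of the three terms is linear in $s$, and otherwise depends only on the fixed $z$ through $\bm{u}$, $\tilde{\bm{u}}$, $D$ and $f$:
\begin{itemize}
  \item the volume term $-\langle\nabla_h^\perp s,\bm{u}\rangle$, where the broken derivative is applied element by element to the antisymmetric function $s$;
  \item the facet term $\llangle 2\{\bm{n}^\perp s\},\tilde{\bm{u}}\rrangle$, where the average is taken of the one-sided values $\bm{n}^{\pm\perp}s^\pm$, each of which flips sign under exchange;
  \item the Coriolis term $\langle f,s\rangle$.
\end{itemize}
Hence each of these terms changes sign when $\mathrm{w}$ and $\mathrm{v}$ are swapped. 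The upwind value $\tilde{\bm{u}}$ is determined by $\bm{u}$ alone, not by the arguments, so it plays no role.

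For the second group, swapping the arguments maps
\[
  -\langle\nabla\cdot\bm{w},\psi\rangle+\langle\phi,\nabla\cdot\bm{v}\rangle
  \quad\text{to}\quad
  -\langle\nabla\cdot\bm{v},\phi\rangle+\langle\psi,\nabla\cdot\bm{w}\rangle ,
\]
which is exactly the negative of the original, by symmetry of the $L^2$ inner product. Adding the two groups gives \eqref{eq:ah-antisym}.

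The only step needing any care is the facet term. I would write it out on a single interior facet as $\bm{n}^{+\perp}s^+ + \bm{n}^{-\perp}s^-$, and confirm that exchanging the arguments negates $s^+$ and $s^-$ individually. This requires that $D$ on each side is the trace from that side, so that dividing by $D$ does not break the pointwise antisymmetry.

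Beyond that I expect no obstacle. The lemma is purely algebraic: it needs no integration by parts and no property of $\tilde{\bm{u}}$ beyond its independence from $\mathrm{w}$ and $\mathrm{v}$. The only assumption is that $D$ does not vanish, so that $\bm{w}/D$ is defined.
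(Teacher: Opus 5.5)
Your proposal is correct and takes essentially the same approach as the paper. The paper also isolates the three advection, facet and Coriolis terms as linear in the pointwise antisymmetric scalar $s=(\bm{w}/D)\cdot\bm{v}^\perp$, which it checks via the scalar triple product on each side of every facet, and then verifies directly that the two divergence terms change sign under the swap.
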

\begin{proof}
Write $\mathrm{w}=(\bm{w},\phi)$, $\mathrm{v}=(\bm{v},\psi)$, and denote by
$s(\mathrm{w},\mathrm{v}) = (\bm{w}/D)\cdot \bm{v}^\perp$ the scalar through which
the advection, facet and Coriolis terms of \eqref{eq:ah-def} depend on
$(\mathrm{w},\mathrm{v})$. Each of these three terms is linear in
$s(\mathrm{w},\mathrm{v})$, with $z$ fixed. From
$\bm{v}^\perp = \hat{\bm{k}}\times \bm{v}$ and the scalar triple product,
\begin{equation}\label{eq:s-antisym}
  \bm{w}\cdot \bm{v}^\perp
  = \hat{\bm{k}}\cdot(\bm{v}\times \bm{w})
  = -\,\hat{\bm{k}}\cdot(\bm{w}\times \bm{v})
  = -\,\bm{v}\cdot \bm{w}^\perp ,
\end{equation}
so $s(\mathrm{v},\mathrm{w}) = -\,s(\mathrm{w},\mathrm{v})$ pointwise, on each
side of every facet. By linearity, the advection, facet and Coriolis terms each
change sign under the interchange. The two divergence terms satisfy
\[
  -\langle \nabla\cdot \bm{v}, \phi\rangle + \langle \psi, \nabla\cdot \bm{w}\rangle
  = -\left( -\langle \nabla\cdot \bm{w}, \psi\rangle + \langle \phi, \nabla\cdot \bm{v}\rangle \right).
\]
Summing the terms of \eqref{eq:ah-def} gives
$a_h(z;\mathrm{v},\mathrm{w}) = -\,a_h(z;\mathrm{w},\mathrm{v})$. 
\end{proof}
\begin{remark}
We note that $a_h$ remains antisymmetric if quadrature rules are used to approximate
these spatial integrals, provided that the same quadrature rule is used to approximate both of the
divergence terms, since the other terms have antisymmetric integrands. This is 
important because the upwinded term cannot be integrated exactly using static quadrature
rules, which is how integration is performed in most finite element codes, including
Firedrake.
\end{remark}

Since $\bm{w}\in V$ implies $\nabla\cdot \bm{w}\in Q$, we may choose $\phi=\nabla\cdot\bm{w}$ in \eqref{eq:var-der}, to obtain
\[
  \left\langle \nabla\cdot \bm{w},\, \dede{H}{D}\right\rangle
  = \left\langle \nabla\cdot \bm{w},\, \frac{|\bm{u}|^2}{2}+g(D+b)\right\rangle,
  \quad \forall \bm{w}\in V,
\]
so that $\dede{H}{D}$ is eliminated. The system reduces to seeking
$(\bm{u},D,\dede{H}{\bm{u}})\in V\times Q\times V$ such that
\begin{align}
  \langle \bm{w}, \bm{u}_t \rangle
  - \left\langle \nabla^\perp_h\left(\frac{\bm{w}}{D}\cdot \dede{H}{\bm{u}}^\perp\right),
  \bm{u} \right\rangle
  + \left\llangle 2\left\{\bm{n}^\perp\left(\frac{\bm{w}}{D}\cdot \dede{H}{\bm{u}}^\perp\right)\right\},
  \tilde{\bm{u}} \right\rrangle \notag\\
  \qquad
  + \left\langle \bm{w}/D, \dede{H}{\bm{u}}^\perp f \right\rangle
  - \left\langle \nabla\cdot \bm{w}, \frac{|\bm{u}|^2}{2}+g(D+b) \right\rangle
  & = 0, \quad \forall \bm{w} \in V,
  \label{eq:semi u}\\
  \left\langle \phi, D_t + \nabla\cdot \dede{H}{\bm{u}} \right\rangle & = 0,
  \quad \forall \phi \in Q,
  \label{eq:semi D}\\
  \left\langle \dede{H}{\bm{u}} - D\bm{u}, \bm{\gamma} \right\rangle & = 0,
  \quad \forall \bm{\gamma} \in V.
  \label{eq:semi F}
\end{align}

\subsection{Time discretisation}
\label{sec:time}
Now we apply the PcPG time discretisation to (\ref{eq:weak u}-\ref{eq:weak D}). 
We seek $(\bm{u},D)$ in the affine trial spaces
\begin{equation}\label{eq:trial-spaces}
  (V\otimes\mathbb{P}_k)_n = \{\bm{v}\in V\otimes\mathbb{P}_k :
    \bm{v}(\cdot,t^n)=\bm{u}^n\},
  \qquad
  (Q\otimes\mathbb{P}_k)_n = \{\psi\in Q\otimes\mathbb{P}_k :
    \psi(\cdot,t^n)=D^n\},
\end{equation}
and take test functions $(\bm{w},\phi)\in
(V\otimes\mathbb{P}_{k-1})\times(Q\otimes\mathbb{P}_{k-1})$. For $\bm{v}\in L^2(I^n;V)$ and
$\psi\in L^2(I^n;Q)$, the space--time $L^2$ projections
$\Pi_V\bm{v}\in V\otimes\mathbb{P}_{k-1}$ and $\Pi_Q\psi\in Q\otimes\mathbb{P}_{k-1}$
are defined by
\begin{equation}\label{eq:proj}
\begin{aligned}
  \int_0^{\Delta t}\!\left\langle \Pi_V\bm{v} - \bm{v},\, \bm{\gamma} \right\rangle \diff t &= 0
  &&\forall\,\bm{\gamma}\in V\otimes\mathbb{P}_{k-1}, \\
  \int_0^{\Delta t}\!\left\langle \Pi_Q\psi - \psi,\, \chi \right\rangle \diff t &= 0
  &&\forall\,\chi\in Q\otimes\mathbb{P}_{k-1}.
\end{aligned}
\end{equation}
Using this notation, the PcPG discretisation of (\ref{eq:weak u}-\ref{eq:weak D})
takes the form
\begin{align}
  \int_{t_n}^{t_{n+1}}\langle \bm{w}, \bm{u}_t \rangle
  - \left\langle \nabla^\perp_h\left(\frac{\bm{w}}{D}\cdot{\Pi_V\dede{H}{\bm{u}}}^\perp\right),
  \bm{u} \right\rangle
  + \left\llangle 2\left\{\bm{n}^\perp\left(\frac{\bm{w}}{D}\cdot{\Pi_V\dede{H}{\bm{u}}}^\perp\right)\right\},
  \tilde{\bm{u}} \right\rrangle \notag\\
  \qquad
  + \left\langle \bm{w}/D, {\Pi_V\dede{H}{\bm{u}}}^\perp f \right\rangle
  - \left\langle \nabla\cdot \bm{w}, \Pi_Q\dede{H}{D} \right\rangle\diff t
  & = 0, \quad \forall \bm{w} \in V,
  \\
  \int_{t_n}^{t_{n+1}}\left\langle \phi, D_t + \nabla\cdot \Pi_V\dede{H}{\bm{u}} \right\rangle\diff t & = 0,
  \quad \forall \phi \in Q, \\  
\int_{t_n}^{t_{n+1}}\left\langle \bm{\gamma}, \Pi_V\dede{H}{\bm{u}} - D\bm{u} \right\rangle
\diff t&= 0, \quad \forall \bm{\gamma}\in V,\\
\int_{t_n}^{t_{n+1}}\left\langle \chi, \Pi_Q\dede{H}{D} - \left(\frac{|\bm{u}|^2}{2}+g(D+b)\right) \right\rangle\diff t &= 0, \quad\forall \chi\in Q.
\end{align}
We repeat our previous remarks about quadrature rules: energy conservation
still holds if we use incomplete quadrature in space and time for the terms
corresponding to $a_h$, but the terms involving time derivatives and 
Hamiltonian derivatives must be integrated exactly. Since
$\dede{H}{\bm{u}}$ is the projection of $D\bm{u}$, with $\bm{u},D\in\mathbb{P}_k$
the product $D\bm{u}$ has degree $2k$ in time, and the defining integral
\eqref{eq:proj} pairs it with $\bm{\gamma}\in\mathbb{P}_{k-1}$, giving degree
$2k+(k-1)=3k-1$. A $n_q$-point Gauss--Legendre rule integrates degree at most  $ 2n_q -1$
exactly, so the projection requires
\begin{equation}\label{eq:quad-req}
  n_q \;\ge\; \Bigl\lceil \tfrac{3k}{2} \Bigr\rceil,
\end{equation}
strictly more than the $n_q \ge k$ needed for the Galerkin integrals
$\langle \bm{w},z_t\rangle$ of degree $2k-2$.

We may eliminate the variable $\Pi_Q\dede{H}{D}$ in a similar manner
to the spatial semidiscretisation. Further, the PcPG scheme carries the
trial variables $(\bm{u},D)$ at degree $k$ but the projected derivative
$\Pi_V\dede{H}{\bm{u}}$ at degree $k-1$. To place all trial functions in the same
time space we introduce a variable $\bm{G}\in V\otimes\mathbb{P}_k$ with
$\bm{G}_t=\Pi_V\dede{H}{\bm{u}}$, enforced variationally,
\begin{equation}\label{eq:G-eq}
  \int_0^{\Delta t}\left\langle \bm{\gamma},\, \bm{G}_t - \dede{H}{\bm{u}} \right\rangle\diff t = 0,
  \quad \forall\, \bm{\gamma}\in V\otimes\mathbb{P}_{k-1}.
\end{equation}

Collecting the spatial discretisation, the PcPG($k$) time discretisation and
\eqref{eq:G-eq}, the fully discrete system seeks
$(\bm{u},D,\bm{G})\in(V\otimes\mathbb{P}_k)\times(Q\otimes\mathbb{P}_k)\times(V\otimes\mathbb{P}_k)$
with $\bm{u}(\bm{x},0)=\bm{u}^n$, $D(\bm{x},0)=D^n$ and $\bm{G}(\bm{x},0)=\bm{0}$,
such that
\begin{align}
\label{eq:discrete u 2}
&\int_0^{\Delta t}\left\langle \bm{w}, \bm{u}_t \right\rangle
 - \left\langle \nabla_h^\perp\!\left(\frac{\bm{w}}{D}\cdot \bm{G}_t^\perp\right), \bm{u} \right\rangle
 + \left\llangle 2\left\{\bm{n}^\perp\!\left(\frac{\bm{w}}{D}\cdot \bm{G}_t^\perp\right)\right\}, \tilde{\bm{u}} \right\rrangle \notag\\
&\qquad
 + \left\langle \bm{w}/D, \bm{G}_t^\perp f \right\rangle
 - \left\langle \nabla\cdot \bm{w},\, \frac{|\bm{u}|^2}{2}+g(D+b) \right\rangle
 \,\diff t = 0,
 \quad \forall\, \bm{w} \in V\otimes\mathbb{P}_{k-1}, \\[4pt]
&\int_0^{\Delta t}\left\langle \phi,\, D_t + \nabla\cdot \bm{G}_t \right\rangle
 \,\diff t = 0,
 \quad \forall\, \phi \in Q\otimes\mathbb{P}_{k-1}, \label{eq:discrete-D-full} \\[4pt]
&\int_0^{\Delta t}\left\langle \bm{G}_t - D\bm{u},\, \bm{\gamma} \right\rangle
 \,\diff t = 0,
 \quad \forall\, \bm{\gamma} \in V\otimes\mathbb{P}_{k-1},
 \label{eq:discrete D 2}
\end{align}
which is the form that we implemented for our numerical experiments (after 
quadrature choices).

\subsection{Extension to the stochastic rotating shallow water equations with SFLT noise}
The SFLT modelling framework produces stochastic partial differential
equations (SPDEs) that conserve energy. We consider Equation (4.19)
of \cite{holm2021stochastic} with vector field stochastic fluctuations
only, so that their scalar fluctations $g_k$ vanish. Writing this in
our present notation and units, we get
\begin{align}\nonumber
{\diff}\bm{u} + (\nabla^\perp \cdot\bm{u} + f)\bm{u}^\perp
{\diff}t + \nabla \left(\frac{1}{2}|\bm{u}|^2 + g(D+b)\right)\diff t
\quad & \\
\qquad + {\frac{1}{D}}\left({\bm{u}}^{\perp}\nabla^\perp\cdot\diff \bm{B} + {\diff \bm{B} \nabla\cdot \bm{u}
+ \nabla \left(\bm{u}\cdot \diff \bm{B}\right)}\right)
& = 0,
\label{eq:stochastic u}
\\
\label{eq:stochastic D}
\diff D + \nabla\cdot(\bm{u}D)\diff t & = 0,
\end{align}
where $\bm{B}$ is a spacetime noise that is Stratonovich in time.

Using the same Hamiltonian as the deterministic case, these equations
have the following equivalent Hamiltonian form,
\begin{align}\nonumber
{\diff}\bm{u} + (\nabla^\perp \cdot\bm{u} + f)\frac{1}{D}\dede{H}{\bm{u}}^\perp{\diff}t+ \nabla \dede{H}{D} \diff t
\quad & \\
\qquad + {\frac{1}{D}}\left(\frac{1}{D}\dede{H}{\bm{u}}^{\perp}\nabla^\perp\cdot\diff \bm{B} 
{+ \diff \bm{B} \nabla\cdot \left(\frac{1}{D}\dede{H}{\bm{u}}\right)
+ \nabla \left(\frac{1}{D}\dede{H}{\bm{u}}\cdot \diff \bm{B}\right)}\right)  & = 0, \\
\diff D + \nabla\cdot\dede{H}{\bm{u}}\diff t & = 0.
\end{align}
We can then formally verify pathwise energy conservation since
\begin{align}
\diff H =& \left\langle \dede{H}{\bm{u}}, \diff \bm{u} \right\rangle +
\left\langle \dede{H}{D}, \diff D \right\rangle, \\
=& -\left\langle \dede{H}{\bm{u}}, \dede{H}{\bm{u}}^\perp(\nabla^\perp \cdot\bm{u} + f)\frac{1}{D} \right\rangle\diff t
- \left\langle \dede{H}{\bm{u}}, \nabla \dede{H}{D} \right\rangle\diff t \nonumber \\
& -\left\langle {\frac{1}{D}}\dede{H}{\bm{u}}, \frac{1}{D}\dede{H}{\bm{u}}^{\perp}\nabla^\perp\cdot\diff \bm{B} \right\rangle 
{-
\left\langle \frac{1}{D}\dede{H}{\bm{u}}, \diff \bm{B} \nabla\cdot \left(\frac{1}{D}\dede{H}{\bm{u}}\right)\right\rangle
- \left\langle \frac{1}{D}\dede{H}{\bm{u}}, \nabla \left(\frac{1}{D}\dede{H}{\bm{u}}\cdot \diff \bm{B}\right)\right\rangle }
\nonumber \\
& - \left\langle \dede{H}{D}, \nabla\cdot \dede{H}{\bm{u}}\right\rangle \diff t = 0,
\end{align}
after integrating gradients by parts and using the boundary conditions $\bm{u}\cdot\bm{n}=0$, making use
of the Stratonovich chain rule. We also note that mass conservation 
is unaffected since the continuity equation remains in flux form.

To obtain a fully discrete scheme, we use discrete time increments
\begin{equation}
  \delta \bm{B}^n = \int_{t_n}^{t_{n+1}} \diff \bm{B}(t).
\end{equation}
Our choice of spatial discretisation of $\bm{B}$ is described in Appendix
\ref{A:B}. The time discretisation is obtained by replacing
\begin{equation}
  \int_{t_n}^{t_{n+1}} \mathcal{N}(u,D) \diff \bm{B}(t)
\end{equation}
by
\begin{equation}
  \int_{t_n}^{t_{n+1}} \mathcal{N}(u,D)\diff t\frac{\bm{B}}{\Delta t}.
\end{equation}
We expect first order strong convergence, since the cPG scheme is
equivalent to the implicit midpoint rule with $\delta \bm{B}$ frozen
over the timestep, a strong first order scheme \cite{kloeden92}.
Hence, for the stochastic scheme, we only consider degree 1
polynomials in time in our numerical experiments.

We now have all the ingredients to describe the fully discrete scheme. The
curl-noise term involving $\nabla^\perp\cdot \diff\bm{B}$ is discretised using the same
approach as the deterministic approach to $\nabla^\perp\cdot \bm{u}$, whilst
the remaining terms are discretised using a discontinuous Galerkin approach using
a centred flux.


Hence, we seek $(\bm{u}, D,
\bm{G}) \in (V\otimes \mathbb{P}_k)\times (Q\otimes
\mathbb{P}_k)\times (V\otimes \mathbb{P}_k)$ with
$\bm{u}(\bm{x},0)=\bm{u}^n$, $D(\bm{x},0)=D^n$ and
$\bm{G}(\bm{x},0)=0$, such that
\begin{align}
\label{eq:sflt-discrete-u}
&\int_0^{\Delta t}\left\langle \bm{w}, \bm{u}_t \right\rangle
 - \left\langle \nabla_h^\perp\!\left(\frac{\bm{w}}{D}\cdot \bm{G}_t^\perp\right), \bm{u} \right\rangle
 + \left\llangle 2\left\{\bm{n}^\perp\!\left(\frac{\bm{w}}{D}\cdot \bm{G}_t^\perp\right)\right\}, \tilde{\bm{u}} \right\rrangle \notag\\
&\qquad
 - \left\langle \nabla_h^\perp\!\left(\frac{\bm{w}}{D}\cdot \bm{G}_t^\perp\right), \frac{\delta\bm{B}^n}{\Delta t D} \right\rangle
 + \left\llangle 2\left\{\bm{n}^\perp\!\left(\frac{\bm{w}}{D}\cdot \bm{G}_t^\perp\right)\right\}, \frac{\tilde{\delta\bm{B}^n}}{D} \right\rrangle \notag\\
&\qquad
 + \left\langle \bm{w}/D, \bm{G}_t^\perp f \right\rangle
 - \left\langle \nabla\cdot \bm{w},\, \frac{|\bm{u}|^2}{2}+g(D+b) \right\rangle
 \notag \\
 &\qquad  \nonumber
{ - \left\langle \nabla_h\left(\frac{\bm{w}\cdot\delta\bm{B}^n}{\Delta t D}\right), \frac{\bm{G}_t}{D}  \right\rangle
 + \left\langle \nabla_h\left(\frac{\bm{G}_t \cdot\delta\bm{B}^n}{\Delta t D}\right), \frac{\bm{w}}{D} \right\rangle }\\
 &\qquad
{ + \left\llangle 2\left\{\bm{n}\left(\frac{\bm{w}\cdot\delta\bm{B}^n}{\Delta t D}\right)\right\}, \left\{\frac{\bm{G}_t}{D}\right\} \right\rrangle - \left\llangle 2\left\{\bm{n}\left(\frac{\bm{G}_t\cdot\delta\bm{B}^n}{\Delta t D}\right)\right\}, \left\{\frac{{\bm{w}}}{D}\right\} \right\rrangle }
 \,\mathrm{d}t 
 = 0,
 \quad \forall \bm{w} \in V\otimes\mathbb{P}_{k-1}, \\[4pt]
&\int_0^{\Delta t}\left\langle \phi,\, D_t + \nabla\cdot \bm{G}_t \right\rangle
 \,\mathrm{d}t = 0,
 \quad \forall \phi \in Q\otimes\mathbb{P}_{k-1}, \\[4pt]
&\int_0^{\Delta t}\left\langle \bm{G}_t - D\bm{u},\, \bm{\gamma} \right\rangle
 \,\mathrm{d}t = 0,
 \quad \forall \bm{\gamma} \in V\otimes\mathbb{P}_{k-1},
 \label{eq:sflt-discrete-G}
\end{align}
where $\tilde{\delta\bm{B}^n}$ is the upwind value of $\delta\bm{B}^n$, taken with the
facet orientation $\operatorname{sign}(\bm{u}\cdot \bm{n})$ used for
$\tilde{\bm{u}}$.

It is then straightforward to demonstrate energy conservation.
\begin{proposition}\label{prop:sflt-energy}
The fully discrete stochastic scheme
\eqref{eq:sflt-discrete-u}--\eqref{eq:sflt-discrete-G} conserves the energy
pathwise, $H(z^{n+1})=H(z^n)$ for every realisation.
\end{proposition}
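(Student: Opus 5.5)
The plan is to repeat the introductory calculation \eqref{eq:H discrete}, now on the fully discrete stochastic system \eqref{eq:sflt-discrete-u}--\eqref{eq:sflt-discrete-G}, with $\bm{G}_t$ playing the role of $\Pi_V\dede{H}{\bm{u}}$ and $\frac{|\bm{u}|^2}{2}+g(D+b)$ that of $\dede{H}{D}$. Since $\bm{u},D\in\mathbb{P}_k$ in time, $H(\bm{u}(t),D(t))$ is differentiable on $[0,\Delta t]$. The fundamental theorem of calculus and the chain rule then give
\begin{equation*}
H(z^{n+1})-H(z^n)=\int_0^{\Delta t}\left\langle D\bm{u},\bm{u}_t\right\rangle+\left\langle \frac{|\bm{u}|^2}{2}+g(D+b),D_t\right\rangle\diff t .
\end{equation*}
Both $\bm{u}_t$ and $D_t$ lie in the degree $k-1$ spaces, so they are admissible test functions.

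First, take $\bm{\gamma}=\bm{u}_t$ in \eqref{eq:sflt-discrete-G}; this replaces $D\bm{u}$ by $\bm{G}_t$ in the first term. Next, take $\phi=\nabla\cdot\bm{G}_t\in Q\otimes\mathbb{P}_{k-1}$ in the continuity equation. This is allowed because $\nabla\cdot V\subseteq Q$ and $\bm{G}_t\in V\otimes\mathbb{P}_{k-1}$. It gives
\begin{equation*}
\int_0^{\Delta t}\langle D_t,\nabla\cdot\bm{G}_t\rangle\diff t=-\int_0^{\Delta t}\|\nabla\cdot\bm{G}_t\|^2\diff t .
\end{equation*}
This identity alone does not close the argument. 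Instead I will use the $\dede{H}{D}$ elimination in the form $\Pi_Q$ of the Bernoulli function: the exact projection onto $Q\otimes\mathbb{P}_{k-1}$ lets us replace $\frac{|\bm{u}|^2}{2}+g(D+b)$ by its projection when paired with $D_t$ and with $\nabla\cdot\bm{w}$ for $\bm{w}=\bm{G}_t$. The continuity equation holds strongly in $Q\otimes\mathbb{P}_{k-1}$, that is $D_t=-\nabla\cdot\bm{G}_t$ after projection. Hence
\begin{equation*}
\int_0^{\Delta t}\Big\langle \tfrac{|\bm{u}|^2}{2}+g(D+b),D_t\Big\rangle\diff t=-\int_0^{\Delta t}\Big\langle \nabla\cdot\bm{G}_t,\tfrac{|\bm{u}|^2}{2}+g(D+b)\Big\rangle\diff t .
\end{equation*}
This step needs $D_t\in Q\otimes\mathbb{P}_{k-1}$ and the continuity residual to vanish there, both of which hold.

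Then take $\bm{w}=\bm{G}_t$ in \eqref{eq:sflt-discrete-u}. The first term becomes $\int\langle\bm{G}_t,\bm{u}_t\rangle$. The Bernoulli term cancels the expression just obtained, so $H^{n+1}-H^n$ equals minus the sum of the remaining terms of \eqref{eq:sflt-discrete-u} evaluated at $\bm{w}=\bm{G}_t$. It therefore suffices to show that every remaining term vanishes when $\bm{w}=\bm{G}_t$.

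\begin{itemize}
\item The deterministic advection, upwind facet and Coriolis terms vanish by the argument of Lemma~\ref{lem:antisym}. Each is linear in $s=(\bm{w}/D)\cdot\bm{G}_t^\perp$, and $s=0$ pointwise (on each facet side) when $\bm{w}=\bm{G}_t$, since $\bm{a}\cdot\bm{a}^\perp=0$.
\item The two curl-noise terms with $\delta\bm{B}^n/(\Delta t D)$ and $\tilde{\delta\bm{B}^n}/D$ are linear in the same scalar $s$, so they vanish for the same reason. This holds regardless of how the upwind value $\tilde{\delta\bm{B}^n}$ is chosen.
\item The remaining four noise terms are arranged as antisymmetric pairs. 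The volume pair is $-\langle\nabla_h(\bm{w}\cdot\delta\bm{B}^n/(\Delta tD)),\bm{G}_t/D\rangle+\langle\nabla_h(\bm{G}_t\cdot\delta\bm{B}^n/(\Delta tD)),\bm{w}/D\rangle$, which is antisymmetric under $\bm{w}\leftrightarrow\bm{G}_t$ and hence zero at $\bm{w}=\bm{G}_t$. The facet pair with centred averages is antisymmetric in the same way.
\end{itemize}

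The main point requiring care is quadrature. As in the Remark and the discussion around \eqref{eq:quad-req}, the terms that must cancel under the swap each have to be evaluated with the same quadrature rule in space and time. The pairing terms used above must be integrated exactly: $\langle\bm{u}_t,\bm{G}_t-D\bm{u}\rangle$, the time-derivative terms, and the Bernoulli pairings with $D_t$ and $\nabla\cdot\bm{G}_t$. The Bernoulli pairing with $\nabla\cdot\bm{G}_t$ appears in both the $\bm{u}$ equation and the energy identity, so it is enough that the same rule is used in both places. Under these conditions the sum is zero, which gives $H(z^{n+1})=H(z^n)$ for every realisation of $\delta\bm{B}^n$. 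No expectation is taken, so conservation is pathwise.
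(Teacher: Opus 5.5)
Your proposal is correct and follows the paper's proof. You test with $\bm{w}=\bm{G}_t$ and make explicit the deterministic energy identity that the paper only cites via \eqref{eq:H discrete}, using $\bm{\gamma}=\bm{u}_t$ and the strong continuity relation $D_t=-\nabla\cdot\bm{G}_t$ in $Q\otimes\mathbb{P}_{k-1}$; you then remove the advection, Coriolis and curl-noise terms through the pointwise factor $(\bm{G}_t/D)\cdot\bm{G}_t^\perp=0$, and cancel the remaining volume and facet noise pairs by antisymmetry under $\bm{w}\leftrightarrow\bm{G}_t$. One small correction to your closing quadrature remark: in time, ``the same rule in both places'' is not by itself enough, because the energy side is fixed by the fundamental theorem of calculus; the Bernoulli pairing $\langle\nabla\cdot\bm{w},\frac{|\bm{u}|^2}{2}+g(D+b)\rangle$ in the $\bm{u}$ equation must therefore be integrated exactly in time (degree $3k-1$), as you in fact stated one sentence earlier.
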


\begin{proof}
  Take $\bm{w}=\bm{G}_t$. The deterministic terms sum to zero
  following the computations of \eqref{eq:H discrete}. The
  curl-noise terms carry the pointwise factor
  $(\bm{G}_t/D)\cdot\bm{G}_t^\perp=0$. In the remaining terms, setting
  $\bm{w}=\bm{G}_t$ makes the two volume integrals coincide up to
  sign, $-\left\langle \nabla_h\psi,\, \bm{G}_t/D\right\rangle
  +\left\langle \nabla_h\psi,\, \bm{G}_t/D\right\rangle=0$ with
  $\psi=(\bm{G}_t\cdot\delta\bm{B}^n)/D$, and likewise the two facet
  integrals cancel. Hence $H(z^{n+1})=H(z^n)$ pathwise.
\end{proof}

\subsection{Iterative solution strategy}

Finally, we discuss our strategy for solving the resulting sparse
nonlinear systems for the basis coefficients.  In our previous work
\cite{wimmer2020energy}, a fixed point iteration based on the
linearisation about a state of rest was used.  A hybridisable mixed
formulation for the linear system was used, which was condensed down
to a single ``trace" variable before solution using a sparse direct
solver (or multigrid, for larger systems). However, we struggled to
converge this iteration in order to drive the energy conservation
error below a certain level.  As an alternative, here we use a
Newton-Krylov method, and hence need to solve the full Jacobian
linearised about the current state, rather than the state of
rest. Having initially investigated using the linearisation about a
state of rest to precondition the full Jacobian without success, we
have instead chosen to use GMRES applied to the monolithic system for
corrections to $(\bm{u},D,\bm{G})$, preconditioned by an additive
Schwarz method using overlapping vertex star patches. In each patch
consisting of the set of cells surrounding one vertex, the normal
component of the corrections to $\bm{u}$ and $\bm{G}$ are held fixed,
and the coupled system for all three variables is solved using a
direct solver. Then the contributions from all of the overlapping
patches are combined using a partition-of-unity weighted summation
which forms the result of the preconditioner. This is implemented
using Firedrake's PCPATCH implementation. To be precise, each
preconditioner application consists of 3 Richardson iterations with
damping factor 0.8, themselves preconditioned by the additive Schwarz
method. This addresses a balance between optimised Krylov method
convergence and the size of the required subspace. Here, we do not use
a multigrid method, since $\Delta t$ is kept in proportion to $\Delta
x$ as the mesh is refined, maintaining a constant Courant number
(unless specified in the test case description). In
\cite{bauer2025fully} we showed that a multigrid method can extend the
region of Courant number robustness and accelerate time to solution
beyond that of an IMEX method. However, we avoid those complications
here.

\section{Numerical examples}\label{sec:numerics}
In this section, we present numerical examples by means of various
test cases, implemented using Firedrake. The goal of these tests is to
demonstrate the stability, accuracy and robustness of the
discretisation, in the deterministic and the stochastic formulations.
The first test case considers the unsteady analytic solution of
\cite{lauter2005unsteady} (Test case~1), for which the discretisation
error is measured against the exact fields, and Williamson test cases
6 and 5 \cite{williamson1992standard} (Test cases~2 and~3), for which
it is measured against fine-grid numerical references. For Test
cases~1 and~2 we report spatial and temporal convergence rates. For
all test cases we report the relative energy drift; any change in
energy is due to truncation of the iterative implicit solver, meaning
that we only have an approximate implementation of the energy
conserving scheme. To examine solver robustness, the average number of
GMRES iterations per step under mesh refinement is reported.

All computations use an icosahedral triangulation of the sphere, refined by
uniform subdivision. Each refinement level splits every triangle into four, so
the cell count quadruples from one level to the next: refinement levels $2$
through $5$ have $320$, $1{,}280$, $5{,}120$ and $20{,}480$ cells respectively,
with degree-of-freedom counts scaling by the same factor of four per level. For example, at
the finest level~$5$ the BDM$_2$/DG$_1$ pair has $153{,}600$ velocity and
$61{,}440$ depth degrees of freedom. The degree of the finite element spaces
is reported in the examples, 
along with the degree of the polynomial approximation of the sphere surface, which
we called the ``mesh degree".
We used a spatial quadrature of degree 6
throughout (i.e., polynomials of degree 6 would be integrated exactly using the quadrature).
A time quadrature scheme of degree $2s$ for time schemes with $s$ stages is used.

We assess accuracy through the $L^2$ error of a field
$\phi \in \{\bm{u}, D\}$ at the final time, computed against a reference
$\phi_\star$. For the Test case~1, the exact solution is known, so we can denote
$\phi_\star = \phi_{\mathrm{ex}}$.  The relative $L^2$ error is given by
\begin{equation}
  E_{\mathrm{rel}}(\phi;\ell)
    = \frac{\|\phi_\ell - \phi_{\mathrm{ex}}\|_{L^2}}
           {\|\phi_{\mathrm{ex}}\|_{L^2}} ,
  \label{eq:rel-err}
\end{equation}
where $\phi_\ell$ is the computed field at refinement level $\ell$. For Test cases~2 and~3 no exact solution is available. Instead, the reference
$\phi_\star = \phi_{\mathrm{ref}}$ is a fine-grid numerical solution of the same
study, obtained at the finest resolution stated in each table caption. To compute errors,
fields from lower resolutions are interpolated to the finer resolution before the difference is taken.
Since the reference is itself approximate, no natural denominator is defined.
These tables therefore report the unnormalised $L^2$ error
\begin{equation}
  E_{\mathrm{abs}}(\phi;\ell)
    = \|\phi_\ell - \phi_{\mathrm{ref}}\|_{L^2} .
  \label{eq:abs-err}
\end{equation}

When verifying spatial convergence, the level $\ell$ is the refinement level with the
time step fixed.  When verifying temporal convergence, $\ell$ is the number of steps with
a fixed mesh refinement level. We index the levels
$\ell_1 < \ell_2 < \cdots < \ell_N$ from coarsest to finest, so that
$\ell_{i-1}$ is the next-coarser level below $\ell_i$. Consecutive levels
differ by a factor of two in resolution: each spatial refinement halves the
mesh spacing, and each temporal refinement halves $\Delta t$. Writing
$E_\star \in \{E_{\mathrm{rel}}, E_{\mathrm{abs}}\}$ for whichever error is
tabulated in the study at hand, the observed order of convergence between
the consecutive pair $(\ell_{i-1}, \ell_i)$,
\begin{equation}
  \mathrm{rate}_\phi
    = \log_2\!\frac{E_\star(\phi;\ell_{i-1})}{E_\star(\phi;\ell_i)} ,
  \qquad \phi \in \{\bm{u}, D\},\quad i = 2,\dots,N,
  \label{eq:rate}
\end{equation}
is invariant under normalisation by a constant and is therefore reported
identically in both conventions.

For the stochastic experiments, we measure the strong (pathwise)
error.  In convergence experiments, all resolutions share one Brownian
path, obtained by refining the noise increments in space and time, and
$E$ is averaged over independent realisations before the rate is
formed.

\subsection{Test case with analytic solution}\label{sec:analytic}
The Läuter et al.\ \cite{lauter2005unsteady} solution has the shallow layer moving in solid body rotation
about an axis that precesses around the polar axis at the rotation
rate~$\Omega$. It is one of the few unsteady exact solutions available for these
equations on the sphere, so the discretisation error is measured directly. With
sphere radius~$a$, the rotation axis is
\begin{equation}
  \bm{c}(t) = -\sin\alpha\,\cos(\Omega t)\,\bm{e}_1
                  + \sin\alpha\,\sin(\Omega t)\,\bm{e}_2
                  + \cos\alpha\,\bm{e}_3 ,
  \label{eq:lauter-axis}
\end{equation}
and the exact velocity, layer depth and (centrifugal) orography fields are
\begin{align}
  \bm{u}(\bm{x},t) &= \frac{u_0}{a}\,\bm{c}(t)\times\bm{x}, \label{eq:lauter-u}\\
  g\,D(\bm{x},t)       &= k_1 - k_2
       - \tfrac12\!\left(u_0\,\bm{c}(t)\cdot\hat{\bm{x}} + \Omega z\right)^{2},
       \label{eq:lauter-D}\\
  g\,b(\bm{x})         &= \tfrac12\,\Omega^{2} z^{2} + k_2 ,
       \label{eq:lauter-b}
\end{align}
where $\bm{x}=(x,y,z)$ are Cartesian coordinates, $\hat{\bm{x}}=\bm{x}/a$
is the outward unit normal and $z=\bm{x}\cdot\bm{e}_3$. The parameters are
$a = 6{,}371{,}220~\mathrm{m}$, $\alpha = \pi/4$,
$u_0 = 2\pi a/(12~\mathrm{days}) \approx 38.6~\mathrm{m\,s^{-1}}$,
$\Omega = 7.292\times10^{-5}~\mathrm{s^{-1}}$, $g = 9.8~\mathrm{m\,s^{-2}}$,
$k_1 = 133{,}681~\mathrm{m^2\,s^{-2}}$ and $k_2 = 0$, integrated to
$t = 43{,}200~\mathrm{s}$ ($12~\mathrm{h}$) for the spatial studies (the final
times for the temporal studies are given in the table captions).

Tables \ref{tab:lauter-spatial-coords2} and
\ref{tab:lauter-spatial-bdfm} present relative $L^2$ errors against
the exact solution, for BDM$_2$ and BDFM$_2$ velocity with DG$_1$
depth, degree $1$ in time and $\Delta t$ fixed, so the rates are
spatial. Both pairs exhibit second order spatial convergence in
$\bm{u}$ and $D$.  Table~\ref{tab:lauter-temporal-coords2}
demonstrates second order temporal convergence on a fixed mesh
resolution.

\begin{table}
  \centering
  \begin{tabular}{c c c c c c}
    \toprule
    Mesh refinement & $h$ (km) & $\|\bm{u}-\bm{u}_{\mathrm{ex}}\|/\|\bm{u}_{\mathrm{ex}}\|$ & rate$_{\bm{u}}$
        & $\|D-D_{\mathrm{ex}}\|/\|D_{\mathrm{ex}}\|$ & rate$_D$ \\
    \midrule
    2 & 1785.5 & $1.60\times10^{-2}$ & --   & $4.11\times10^{-3}$ & --   \\
    3 &  892.8 & $2.38\times10^{-3}$ & 2.75 & $1.04\times10^{-3}$ & 1.99 \\
    4 &  446.4 & $4.93\times10^{-4}$ & 2.27 & $2.60\times10^{-4}$ & 2.00 \\
    5 &  223.2 & $1.14\times10^{-4}$ & 2.11 & $6.51\times10^{-5}$ & 2.00 \\
    \bottomrule
  \end{tabular}
  \caption{Spatial convergence for the Lauter et al test case. BDM$_2$/DG$_1$,
    mesh degree $2$, degree $1$ in time, $\Delta t = 21.6$\,s, $t = 12$\,h.}
  \label{tab:lauter-spatial-coords2}
\end{table}

\begin{table}
  \centering
  \begin{tabular}{c c c c c c}
    \toprule
    mesh refinement& $h$ (km) & $\|\bm{u}-\bm{u}_{\mathrm{ex}}\|/\|\bm{u}_{\mathrm{ex}}\|$ & rate$_{\bm{u}}$
        & $\|D-D_{\mathrm{ex}}\|/\|D_{\mathrm{ex}}\|$ & rate$_D$ \\
    \midrule
    2 & 1785.5 & $8.42\times10^{-3}$ & --   & $4.11\times10^{-3}$ & --   \\
    3 &  892.8 & $2.12\times10^{-3}$ & 1.99 & $1.04\times10^{-3}$ & 1.99 \\
    4 &  446.4 & $5.37\times10^{-4}$ & 1.98 & $2.60\times10^{-4}$ & 2.00 \\
    5 &  223.2 & $1.35\times10^{-4}$ & 2.00 & $6.51\times10^{-5}$ & 2.00 \\
    \bottomrule
  \end{tabular}
  \caption{Spatial convergence for the L\"auter et al test case. Same settings as Table \ref{tab:lauter-spatial-coords2}, except that BDFM$_2$/DG$_1$ spaces were used.}
  \label{tab:lauter-spatial-bdfm}
\end{table}


\begin{table}
  \centering
  \begin{tabular}{c c c c c c c}
    \toprule
    Number of time steps & $\Delta t$ (s) &  $\|\bm{u}-\bm{u}_{\mathrm{ex}}\|/\|\bm{u}_{\mathrm{ex}}\|$ & rate$_{\bm{u}}$
      & $\|D-D_{\mathrm{ex}}\|/\|D_{\mathrm{ex}}\|$ & rate$_D$ \\
    \midrule
     160 & 2700 & $7.17\times10^{-2}$ & --    & $3.52\times10^{-3}$ & --    \\
     320 & 1350 & $1.80\times10^{-2}$ & 1.99 & $8.84\times10^{-4}$ & 1.99 \\
     640 &  675 & $4.50\times10^{-3}$ & 1.99 & $2.21\times10^{-4}$ & 1.99 \\
    1280 &  338 & $1.13\times10^{-3}$ & 2.00 & $5.55\times10^{-5}$ & 1.99 \\
    \bottomrule
  \end{tabular}
  \caption{Temporal convergence for the L\"auter et al test case. BDM$_3$/DG$_2$,
    mesh degree $2$, degree $1$ in time, fixed mesh (mesh refinement 5), $t = 5$\,days.}
  \label{tab:lauter-temporal-coords2}
\end{table}

Figure~\ref{fig:lauter-div} shows the time evolution of the $L^2$ norm of the velocity divergence. 
In this test problem, the initial condition is in geostrophic balance except for the region
around the mountain. This causes a coherent gravity wave pattern initially, with the waves
dispersing and mixing their phases at later times, at which point the divergence norm
looks flatter. This demonstrates that the discretisation is avoiding spurious coupling between
balanced and unbalanced modes.
Figure~\ref{fig:lauter-energy} shows the relative energy drift,
which stays below $10^{-12}$ for the whole run (this can be further reduced
by choosing even tighter solve tolerances).

\begin{figure}
  \centering
  \includegraphics[width=0.68\textwidth]{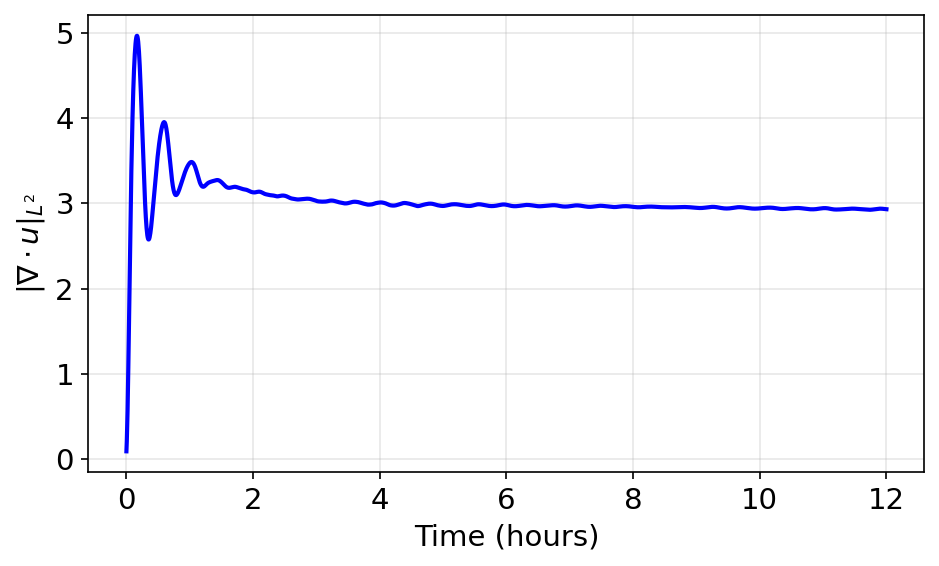}
  \caption{Velocity divergence $\|\nabla\cdot \bm{u}\|_{L^2}$ versus time for the L\"auter et al test case.
    BDM$_2$/DG$_1$, mesh refinement 4, $\Delta t = 42.2$\,s.}
  \label{fig:lauter-div}
\end{figure}

\begin{figure}
  \centering
  \includegraphics[width=0.68\textwidth]{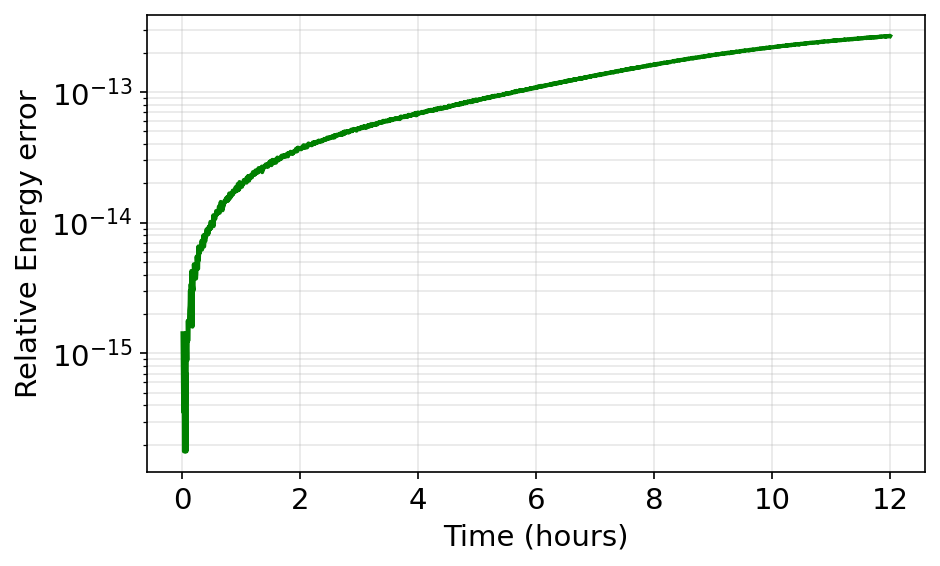}
  \caption{Relative energy drift $|(H-H_0)/H_0|$ versus time for the L\"auter et al test case.
    BDM$_2$/DG$_1$, mesh refinement 4, $\Delta t = 42.2$\,s.}
  \label{fig:lauter-energy}
\end{figure}


For the higher order case we use BDM$_3$/DG$_2$ with degree $2$ in
time.  Table~\ref{tab:lauter-spatial-deg2} gives spatial convergence
rates close to third order, and Table~\ref{tab:lauter-temporal-deg2}
gives temporal rates between 3 and 4.  For degree $s$ PcPG schemes we
expect order $s+1$ convergence, but these schemes are minor
perturbations of Gauss-Legendre schemes, for which we expect order
$2s$ convergence, and we suspect that we have not yet reached the 3rd
order asymptotic regime because of this.

\begin{table}
  \centering
  \begin{tabular}{c c c c c c}
    \toprule
    mesh refinement& $h$ (km) & $\|\bm{u}-\bm{u}_{\mathrm{ex}}\|/\|\bm{u}_{\mathrm{ex}}\|$ & rate$_{\bm{u}}$
        & $\|D-D_{\mathrm{ex}}\|/\|D_{\mathrm{ex}}\|$ & rate$_D$ \\
    \midrule
    2 & 1785.5 & $8.89\times10^{-4}$ & --   & $3.02\times10^{-4}$ & --   \\
    3 &  892.8 & $9.66\times10^{-5}$ & 3.20 & $3.82\times10^{-5}$ & 2.98 \\
    4 &  446.4 & $1.15\times10^{-5}$ & 3.07 & $4.79\times10^{-6}$ & 3.00 \\
    \bottomrule
  \end{tabular}
  \caption{Spatial convergence for the L\"auter et al test case.
    BDM$_3$/DG$_2$, mesh degree $3$, degree $2$ in time, $\Delta t = 43.2$\,s,
    $t = 12$\,h.}
  \label{tab:lauter-spatial-deg2}
\end{table}

\begin{table}
  \centering
  \begin{tabular}{c c c c c c c}
    \toprule
    Number of timesteps & $\Delta t$ (s) & $\|\bm{u}-\bm{u}_{\mathrm{ex}}\|/\|\bm{u}_{\mathrm{ex}}\|$ & rate$_{\bm{u}}$
      & $\|D-D_{\mathrm{ex}}\|/\|D_{\mathrm{ex}}\|$ & rate$_D$ \\
    \midrule
    240 & 5400 & $2.20\times10^{-3}$ & --   & $1.08\times10^{-4}$ & --   \\
    480 & 2700 &  $1.38\times10^{-4}$ & 4.00 & $8.27\times10^{-6}$ & 3.71 \\
    960 & 1350 &  $1.35\times10^{-5}$ & 3.35 & $9.39\times10^{-7}$ & 3.13 \\
    \bottomrule
  \end{tabular}
  \caption{Temporal convergence for the L\"auter et al test case.
    BDM$_3$/DG$_2$, mesh degree $3$, degree $2$ in time, fixed mesh (refinement~4),
    $t = 15$\,days.}
  \label{tab:lauter-temporal-deg2}
\end{table}

\subsection{Test cases without analytic solution}\label{sec:nonanalytic}
In this section, we consider two standard test cases with no closed
form solution for the rotating shallow water equations, where
convergence is measured against the fine grid references described
above: the Rossby--Haurwitz wave, a (near) travelling wave solution,
and a flow over an isolated mountain, where orographic forcing generates
gravity waves and a lee vortex train throughout the integration. Each
is run in both the deterministic and the stochastic setting.

\subsubsection{Rossby--Haurwitz wave}
The test case~6 of \cite{williamson1992standard} is a Rossby--Haurwitz wave of zonal wavenumber~4.
Although the Rossby--Haurwitz wave is an
exact solution of the nondivergent barotropic vorticity equation and not of the
shallow water equations, it is a standard smooth benchmark. The initial velocity field is nondivergent and is obtained from the stream
function
\begin{equation}
  \psi = -a^2\omega\,\sin\theta
         + a^2 K\,\cos^{R}\!\theta\,\sin\theta\,\cos R\lambda ,
  \label{eq:w6-psi}
\end{equation}
where $\lambda$ and $\theta$ denote longitude and latitude, $a$ is the sphere
radius, and $\omega$, $K$, $R$ are constants. The velocity is initialised as
$\bm{u}_0 = \nabla^{\perp}\psi = \hat{\bm{k}}\times\nabla\psi$, projected into the
velocity space $V$; equivalently, the analytic components are
\begin{align}
  u &= a\omega\cos\theta
       + aK\,\cos^{R-1}\!\theta\,(R\sin^2\theta - \cos^2\theta)\,\cos R\lambda ,
       \label{eq:w6-u}\\
  v &= -aKR\,\cos^{R-1}\!\theta\,\sin\theta\,\sin R\lambda .
       \label{eq:w6-v}
\end{align}
The depth field is obtained by solving the balance equation so that the initial
divergence tendency vanishes,
\begin{equation}
  D = h_0 + \frac{a^2}{g}\left( A(\theta) + B(\theta)\cos R\lambda
             + C(\theta)\cos 2R\lambda \right) ,
  \label{eq:w6-D}
\end{equation}
with
\begin{align}
  A(\theta) &= \tfrac12\,\omega(2\Omega + \omega)\cos^2\theta
    + \tfrac14 K^2 \cos^{2R} \theta
      \left[(R+1)\cos^2\theta + (2R^2 - R - 2)
           - 2R^2\cos^{-2}\theta\right], \label{eq:w6-A}\\
  B(\theta) &= \frac{2(\Omega + \omega)K}{(R+1)(R+2)}\,
    \cos^{R}\theta
      \left[(R^2 + 2R + 2) - (R+1)^2\cos^2\theta\right], \label{eq:w6-B}\\
  C(\theta) &= \tfrac14 K^2 \cos^{2R}\theta
      \left[(R+1)\cos^2\theta - (R+2)\right]. \label{eq:w6-C}
\end{align}
Here $\Omega = 7.292\times10^{-5}~\mathrm{s^{-1}}$ is the planetary rotation
rate and $g = 9.8~\mathrm{m\,s^{-2}}$. The parameters are
$\omega = K = 7.847\times10^{-6}~\mathrm{s^{-1}}$, $h_0 = 8000~\mathrm{m}$ and
zonal wavenumber $R = 4$, with sphere radius $a = 6{,}371{,}220~\mathrm{m}$. 

We integrate to $t = 2$\,days, comparing against numerical
references. For BDM$_2$/DG$_1$ spaces and degree 1 in time,
Table~\ref{tab:w6-spatial} demonstrates the expected second order
convergence in space (approaching from above, in the case of
velocity), whilst Table~\ref{tab:w6-temporal} demonstrates second
order convergence in time.  Figure~\ref{fig:w6-div} shows the velocity
divergence. Initially divergence free, the velocity develops only a
small bounded divergence with no secular growth, $\|\nabla\cdot
\bm{u}\|/\|\bm{u}\| \sim 10^{-7}$.  Figure~\ref{fig:w6-energy}
confirms the energy conservation.

\begin{table}
  \centering
  \begin{tabular}{c c c c c c}
    \toprule
    mesh refinement& $h$ (km) & $\|\bm{u}-\bm{u}_{\mathrm{ref}}\|_{L^2}$ & rate$_{\bm{u}}$
        & $\|D-D_{\mathrm{ref}}\|_{L^2}$ & rate$_D$ \\
    \midrule
    2 & 1785.5 & $1.28\times10^{7}$ & --   & $5.45\times10^{8}$ & --   \\
    3 &  892.8 & $1.75\times10^{6}$ & 2.87 & $1.39\times10^{8}$ & 1.97 \\
    4 &  446.4 & $3.01\times10^{5}$ & 2.54 & $3.47\times10^{7}$ & 2.00 \\
    5 &  223.2 & $5.51\times10^{4}$ & 2.45 & $8.41\times10^{6}$ & 2.04 \\
    \bottomrule
  \end{tabular}
  \caption{Spatial convergence for the deterministic Rossby-Haurwitz wave. BDM$_2$/DG$_1$,
    mesh degree $2$, degree $1$ in time, $t = 2$\,days. $L^2$ differences against
    a numerical reference at mesh refinement 6.}
  \label{tab:w6-spatial}
\end{table}

\begin{table}
  \centering
  \begin{tabular}{c c c c c c c}
    \toprule
    Number of timesteps & $\Delta t$ (s) 
      & $\|\bm{u}-\bm{u}_{\mathrm{ref}}\|_{L^2}$ & rate$_{\bm{u}}$
      & $\|D-D_{\mathrm{ref}}\|_{L^2}$ & rate$_D$ \\
    \midrule
     80 & 2160 &  $9.46\times10^{6}$ & --   & $1.61\times10^{8}$ & --   \\
    160 & 1080 & $4.70\times10^{6}$ & 1.01 & $1.24\times10^{8}$ & 0.38 \\
    320 &  540 &  $1.28\times10^{6}$ & 1.88 & $5.55\times10^{7}$ & 1.16 \\
    640 &  270 &  $3.42\times10^{5}$ & 1.90 & $1.34\times10^{7}$ & 2.05 \\
    \bottomrule
  \end{tabular}
  \caption{Temporal convergence for the deterministic Rossby-Haurwitz wave. BDM$_2$/DG$_1$,
    degree $1$ in time, fixed mesh (mesh refinement 5), $t = 2$\,days. $L^2$ differences against a
    small timestep reference with $2560$ steps.}
  \label{tab:w6-temporal}
\end{table}

\begin{figure}
  \centering
  \includegraphics[width=0.65\textwidth]{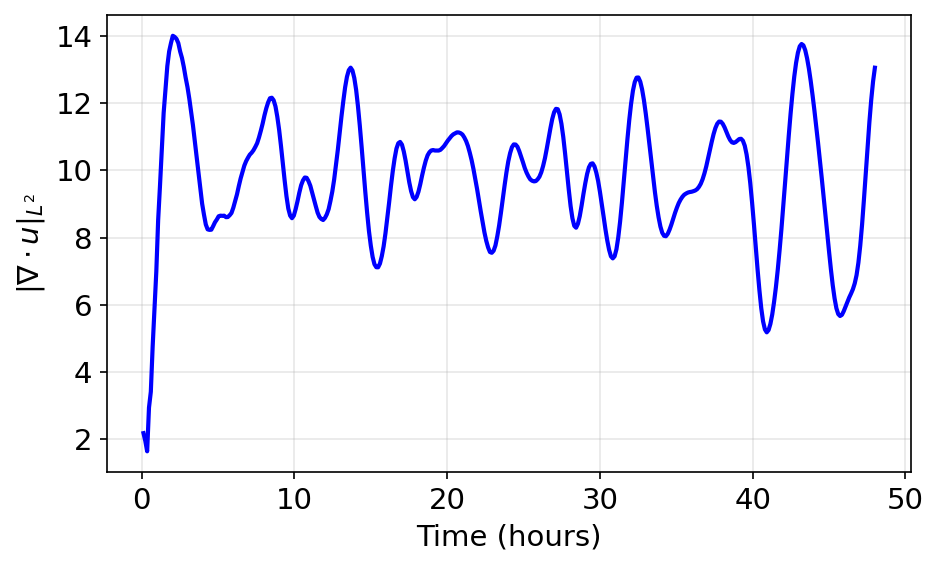}
  \caption{Velocity divergence $\|\nabla\cdot \bm{u}\|_{L^2}$ versus time
    for the deterministic Rossby-Haurwitz wave. BDM$_2$/DG$_1$, mesh refinement 5, $\Delta t = 67.5$\,s.}
  \label{fig:w6-div}
\end{figure}

\begin{figure}
  \centering
  \includegraphics[width=0.65\textwidth]{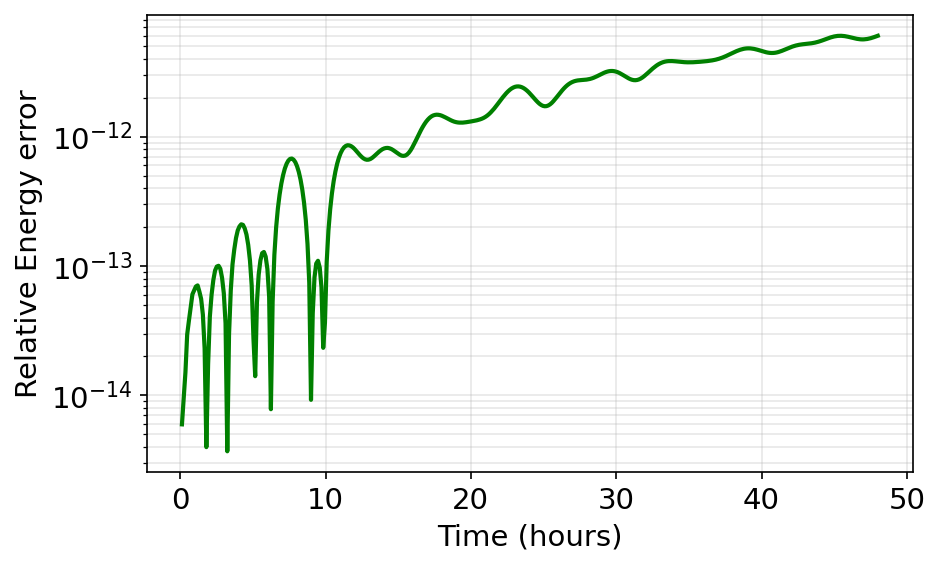}
  \caption{Relative energy drift $|(H-H_0)/H_0|$ versus time
    for the deterministic Rossby-Haurwitz wave. BDM$_2$/DG$_1$, mesh refinement 5, $\Delta t = 67.5$\,s.}
  \label{fig:w6-energy}
\end{figure}

\subsubsection{Stochastic Rossby-Haurwitz wave}
We now add SFLT energy-conserving transport noise to the
Rossby-Haurwitz wave test, using correlation length $\lambda =
6\times10^6\,\mathrm{m}$ and noise level $r_\sigma = 0.10$ (as defined
in the Appendix).

Table~\ref{tab:w6-sflt-robust} reports the solver behaviour in the
presence of SFLT noise.  The average number of GMRES iterations per
timestep remains at around 11 across the refinement levels (recalling
the scaling of $\Delta t$ with $\Delta x$). The energy drift is also
under control, being due to truncation of iterative solvers.



\begin{table}
  \centering
  \begin{tabular}{c c c c c}
    \toprule
    mesh refinement& $h$ (km) & $\Delta t$ (s) & avg.\ linear iters / step & $|(H-H_0)/H_0|$ \\
    \midrule
    2 & 1785.5 & 3456 & 11.38 & $2.1\times10^{-8}$  \\
    3 &  892.8 & 1728 & 11.00 & $1.6\times10^{-9}$  \\
    4 &  446.4 &  864 & 11.00 & $9.6\times10^{-11}$ \\
    5 &  223.2 &  432 & 11.82 & $6.0\times10^{-12}$ \\
    \bottomrule
  \end{tabular}
  \caption{SFLT iterative solver behaviour and energy conservation for the Rossby-Haurwitz wave
    testcase.
    BDM$_2$/DG$_1$, degree $1$ in time, $\Delta t \propto h$ (Number of timesteps$=50,100,200,400$
    for refinement $2$--$5$), $t = 2$\,days.}
  \label{tab:w6-sflt-robust}
\end{table}

In these experiments, we measure the pathwise error $\|z_\ell -
z_{\mathrm{ref}}\|$ at fixed noise realisation. Let $\mathcal{T}_\ell$
denote $\mathcal{T}_h$ at level~$\ell$, topologically nested but not
geometrically nested due to the approximate polynomial representation
of the sphere. In space, $\psi_{\mathrm{noise}}$ is sampled once on
the finest mesh and $L^2$-projected onto
$\mathrm{CG}_2(\mathcal{T}_\ell)$, yielding a velocity noise
$\delta\bm{B}^n_\ell$ at each resolution. In time, the finest run
draws the increments $\Delta W_n$ on its step $\Delta t_{\min}$, and a
coarse step of length $m\,\Delta t_{\min}$ sums them,
$\sum_{j=1}^{m}\Delta W_{n+j}$, of variance $m\,\Delta t_{\min} =
\Delta t$. The finest run thus fixes one Brownian path that every
coarser run subsamples.

Table~\ref{tab:w6-sflt-spatial} gives the spatial convergence. The
rates are similar to the deterministic ones in
Table~\ref{tab:w6-spatial}. Table~\ref{tab:w6-sflt-temporal} gives the
temporal convergence. At $r_\sigma=0.1$, both fields converge at
approximately first order in $\Delta t$, as expected.

\begin{table}
  \centering
  \begin{tabular}{c c c c c c}
    \toprule
    mesh refinement & $h$ (km) & $\|\bm{u}-\bm{u}_{\mathrm{ref}}\|_{L^2}$ & rate$_{\bm{u}}$
        & $\|D-D_{\mathrm{ref}}\|_{L^2}$ & rate$_D$ \\
    \midrule
    2 & 1785.5 & $1.27\times10^{7}$ & --   & $5.45\times10^{8}$ & --   \\
    3 &  892.8 & $1.74\times10^{6}$ & 2.87 & $1.38\times10^{8}$ & 1.98 \\
    4 &  446.4 & $2.89\times10^{5}$ & 2.59 & $3.36\times10^{7}$ & 2.04 \\
    \bottomrule
  \end{tabular}
  \caption{SFLT spatial (strong) convergence for the Rossby-Haurwitz wave. BDM$_2$/DG$_1$, degree $1$ in time,
    $t = 2$\,days, averaged over 3 seeds.}
  \label{tab:w6-sflt-spatial}
\end{table}


\begin{table}
  \centering
  \begin{tabular}{c c c c c c}
    \toprule
    Number of timesteps & $\Delta t$ (s) & $\|\bm{u}-\bm{u}_{\mathrm{ref}}\|_{L^2}$ & rate$_{\bm{u}}$
        & $\|D-D_{\mathrm{ref}}\|_{L^2}$ & rate$_D$ \\
    \midrule
    200  & 864 & $1.79\times10^{7}$ & --   & $1.87\times10^{8}$ & --   \\
    400  & 432 & $9.77\times10^{6}$ & 0.87 & $1.01\times10^{8}$ & 0.90 \\
    800  & 216 & $4.07\times10^{6}$ & 1.26 & $4.48\times10^{7}$ & 1.17 \\
    \bottomrule
  \end{tabular}
  \caption{SFLT temporal (strong) convergence for the Rossby-Haurwitz wave. BDM$_2$/DG$_1$, degree $1$ in time, fixed
    mesh (mesh refinement 4), $t = 2$\,days. Errors compared against a reference run
    with 1600 timesteps, then averaged over 3 seeds for the random number generator.}
  \label{tab:w6-sflt-temporal}
\end{table}

Figures~\ref{fig:w6-sflt-energy} and \ref{fig:w6-sflt-div} show that
the discretised SFLT noise does not affect energy conservation, and
that the SFLT model is not generating significant unbalanced motion as
measured by the divergence.  Figures~\ref{fig:w6-compare-vort_small}
and \ref{fig:w6-compare-vort} show the effect of the SFLT noise on the
solution. In Figure~\ref{fig:w6-compare-vort} the amplitude is raised
to $r_\sigma \approx 0.5$, above the reference value $\approx 0.1$, to
make the structure visible: the smooth wavenumber-$4$ Rossby--Haurwitz
wave acquires small scale eddies with the large scale pattern intact.

\begin{figure}
  \centering
  \includegraphics[width=0.65\textwidth]{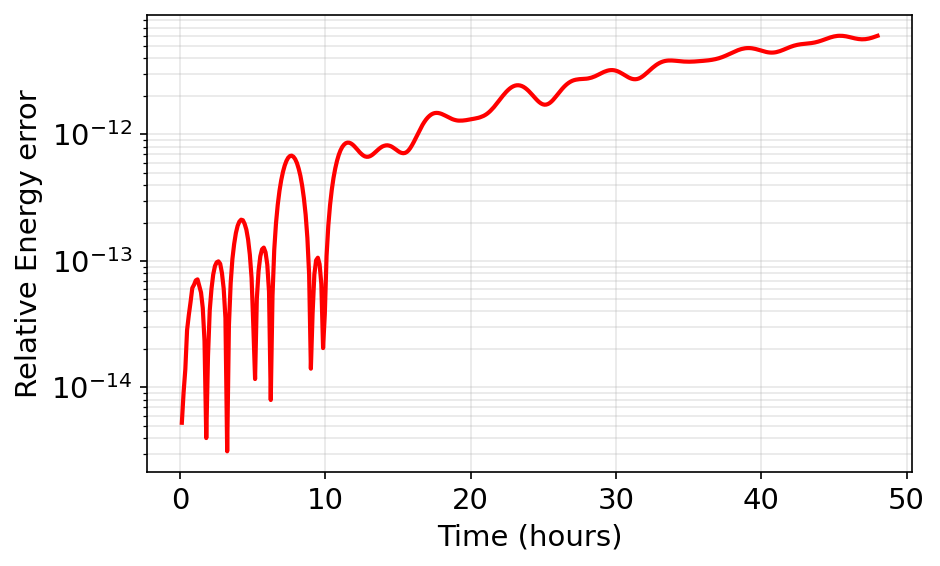}
  \caption{SFLT relative energy drift $|(H-H_0)/H_0|$ versus time  for the Rossby-Haurwitz wave.
    BDM$_2$/DG$_1$, mesh refinement 5, degree $1$ in time, $\Delta t = 432$\,s.}
  \label{fig:w6-sflt-energy}
\end{figure}

\begin{figure}
  \centering
  \includegraphics[width=0.65\textwidth]{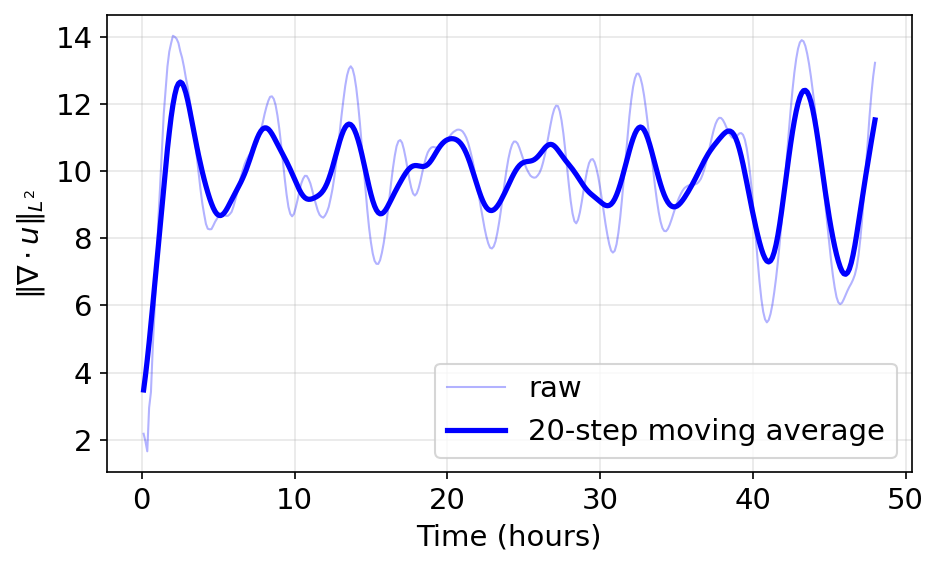}
  \caption{SFLT velocity divergence $\|\nabla\cdot \bm{u}\|_{L^2}$ versus
    time  for the Rossby-Haurwitz wave. BDM$_2$/DG$_1$, mesh refinement 5, degree $1$ in time, $\Delta t = 432$\,s.}
  \label{fig:w6-sflt-div}
\end{figure}


\begin{figure}
  \centering
  \includegraphics[width=\textwidth]{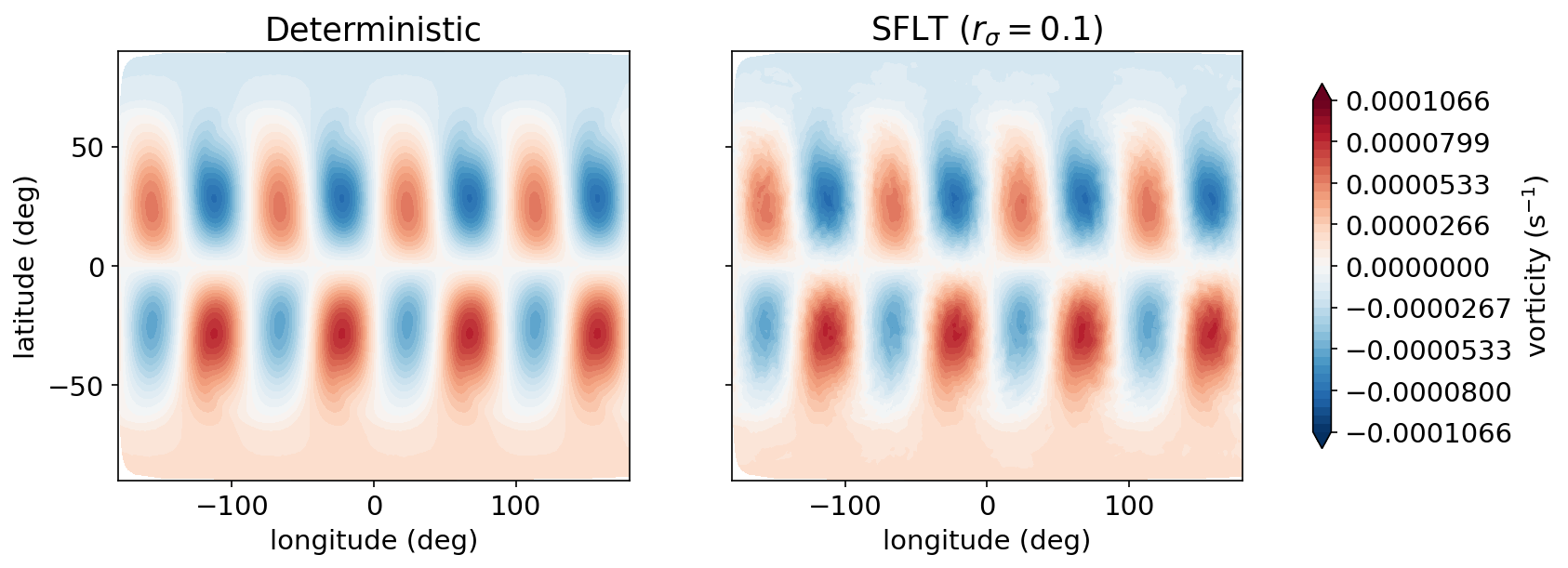}
  \caption{Rossby-Haurwitz wave vorticity at $t = 2$\,days, deterministic versus
    SFLT at mild forcing ($r_{\sigma} \approx 0.1$). Shared colour
    scale. BDM$_2$/DG$_1$, mesh refinement 5, degree $1$ in time.}
  \label{fig:w6-compare-vort_small}
\end{figure}

\begin{figure}
  \centering
  \includegraphics[width=\textwidth]{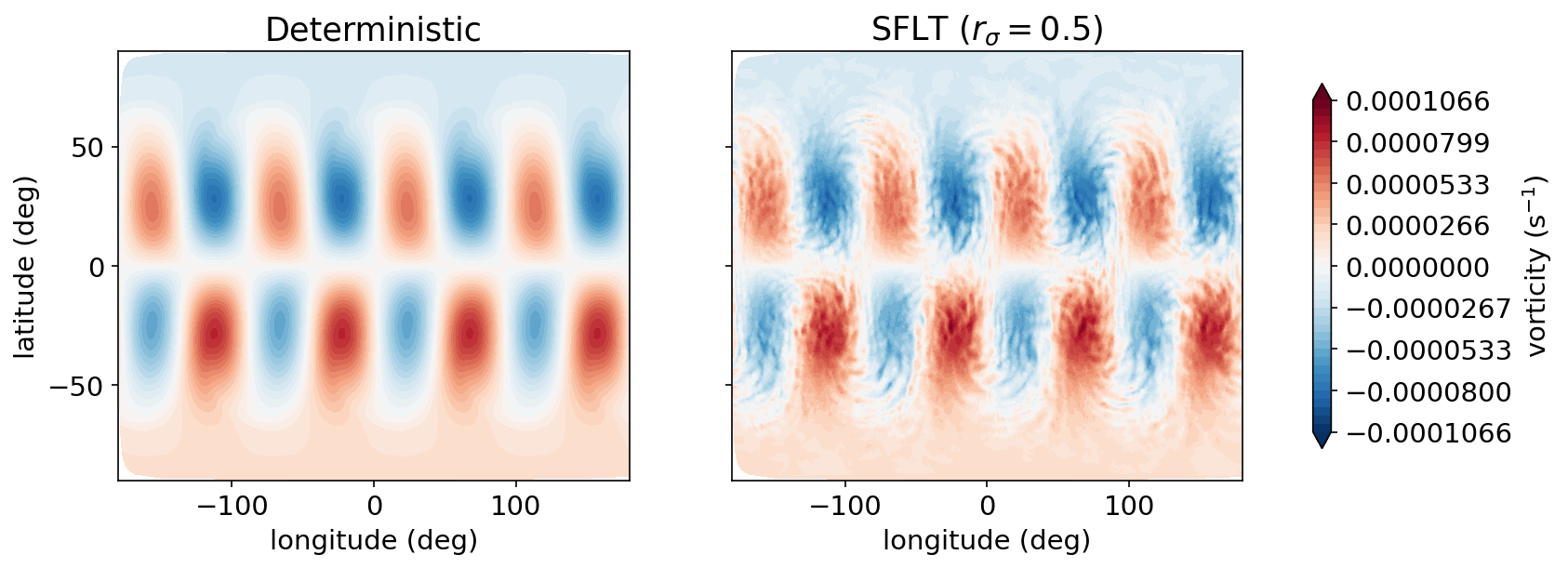}
  \caption{Rossby-Haurwitz wave vorticity at $t = 2$\,days, deterministic versus
    SFLT at strong forcing ($r_{\sigma} \approx 0.5$). Shared colour
    scale. BDM$_2$/DG$_1$, mesh refinement 5, degree $1$ in time.}
  \label{fig:w6-compare-vort}
\end{figure}

To examine noise of larger spatial scale, we increase the correlation
length to $\lambda = 3\times10^{6}\,\mathrm{m}$ with $r_\sigma \approx
0.5$ and integrate for $5$ days
(Figures~\ref{fig:w6-lam3e6-day1}--\ref{fig:w6-lam3e6-day5}). The
large-scale features remain dominated by the deterministic flow: the
wavenumber-$4$ pattern is retained, and the SFLT run departs from the
deterministic one through a smooth difference that grows in time.

\begin{figure}
  \centering
  \includegraphics[width=\textwidth]{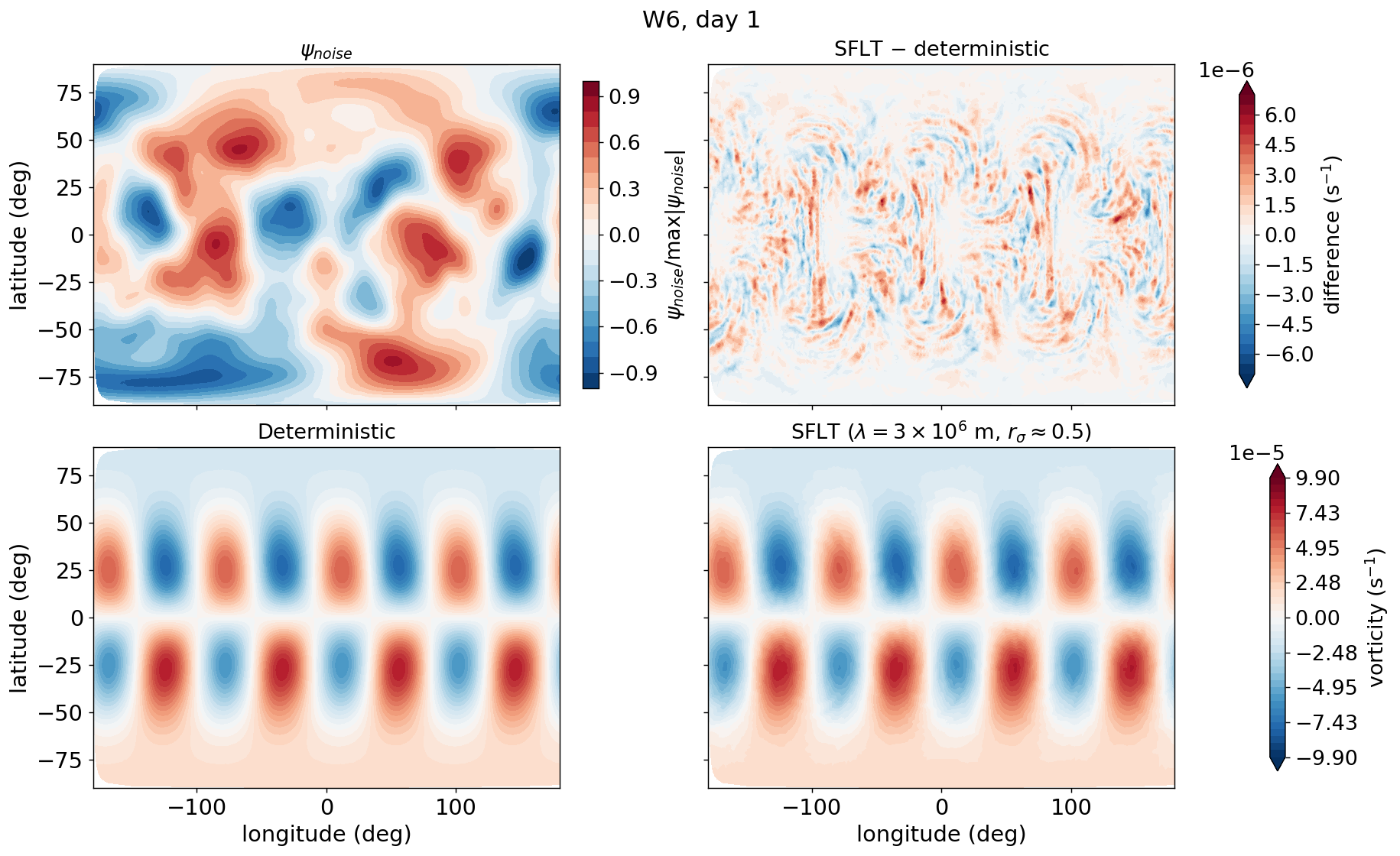}
  \caption{Day~$1$  for the Rossby-Haurwitz wave, $\lambda = 3\times10^{6}\,\mathrm{m}$, $r_\sigma \approx 0.5$.
  Top: $\psi_{\mathrm{noise}}$ (left) and SFLT minus deterministic vorticity
  (right). Bottom: deterministic (left) and SFLT (right) vorticity.}
\label{fig:w6-lam3e6-day1}
\end{figure}

\begin{figure}
  \centering
  \includegraphics[width=\textwidth]{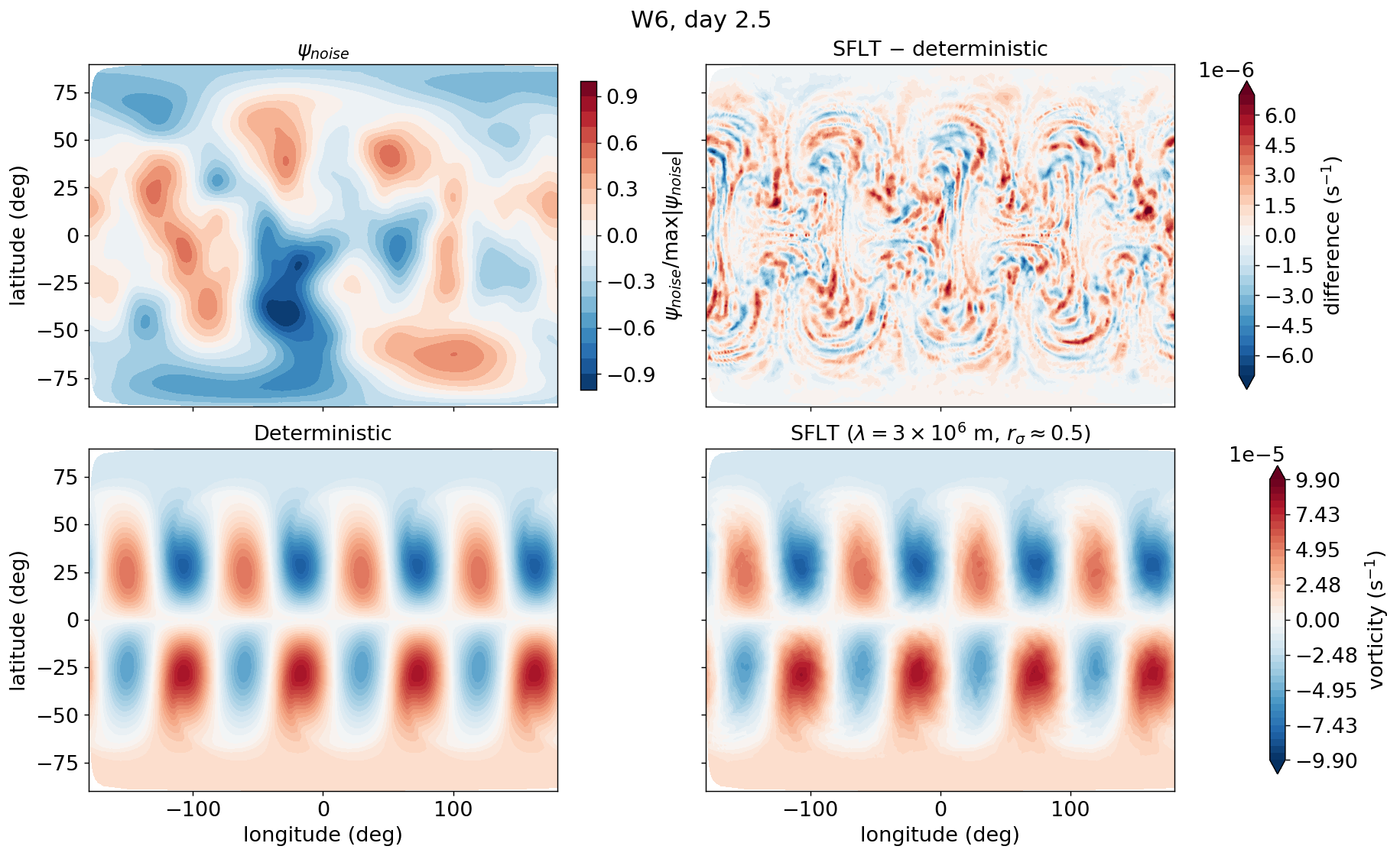}
  \caption{As Figure~\ref{fig:w6-lam3e6-day1}, day~$2.5$.}
  \label{fig:w6-lam3e6-day2.5}
\end{figure}

\begin{figure}
  \centering
  \includegraphics[width=\textwidth]{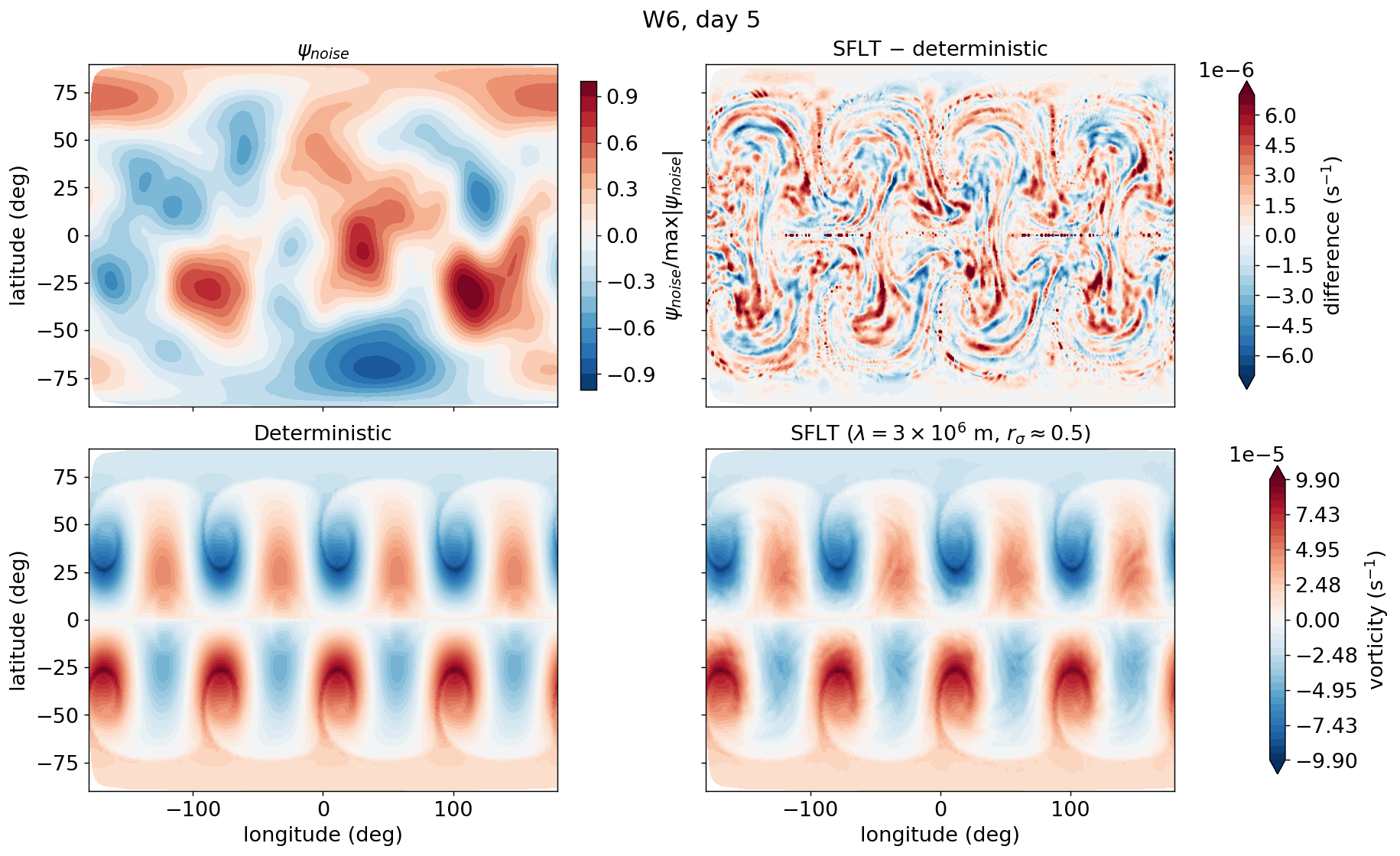}
  \caption{As Figure~\ref{fig:w6-lam3e6-day1}, day~$5$.}
  \label{fig:w6-lam3e6-day5}
\end{figure}

Figure~\ref{fig:sflt-amp-sweep} varies the amplitude from $0$ to $0.48$ of the
flow speed at fixed correlation length, demonstrating robustness of the
energy conservation to the SFLT noise (the curves appear superposed at
this scale).
Figure~\ref{fig:sflt-ensemble-energy} repeats the physical-amplitude run over
$16$ seeds. The per-seed maxima are indistinguishable, so conservation is
seed-independent and holds pathwise, not only in expectation.

\begin{figure}
  \centering
  \includegraphics[width=0.65\textwidth]{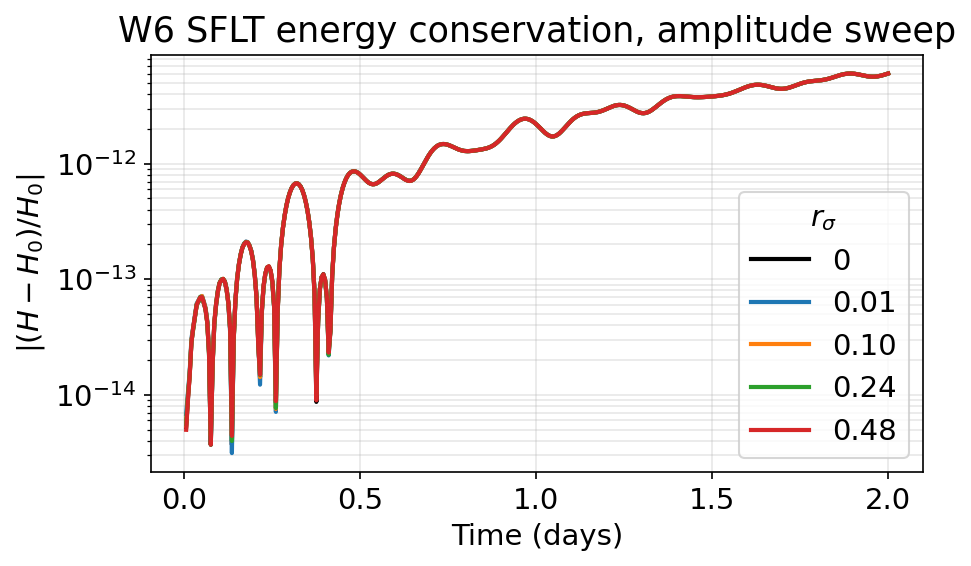}
  \caption{SFLT relative energy drift $|(H-H_0)/H_0|$  for the Rossby-Haurwitz wave under varying noise
    levels $r_\sigma \approx \{0,\,0.01,\,0.10,\,0.24,\,0.48\}$.}
  \label{fig:sflt-amp-sweep}
\end{figure}

\begin{figure}
  \centering
  \includegraphics[width=0.65\textwidth]{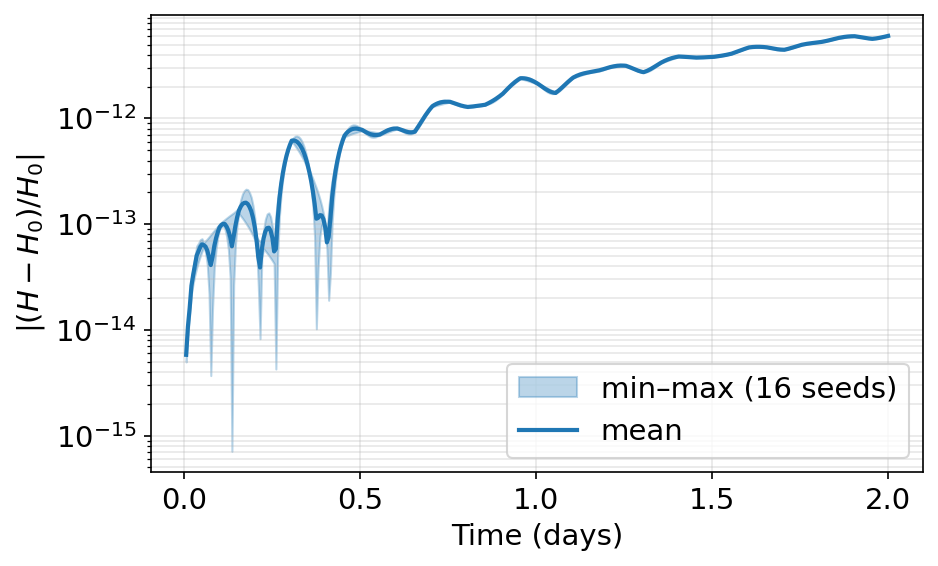}
  \caption{SFLT relative energy drift $|(H-H_0)/H_0|$  for the Rossby-Haurwitz wave across an ensemble of
    $16$ seeds at physical amplitude ($r_{\sigma} \approx 0.10$).}
  \label{fig:sflt-ensemble-energy}
\end{figure}

\subsubsection{Flow over an isolated mountain}
Test case 5 of  \cite{williamson1992standard}  prescribes a steady solid body rotation zonal flow
impinging on an isolated mountain. There is no analytic solution. The
orography generates gravity waves and a downstream Rossby wave train throughout
the integration, so the dynamics remain active. With Cartesian coordinates
$\bm{x}=(x,y,z)$ on the sphere of radius~$a$, longitude
$\lambda=\operatorname{atan2}(y,x)$ and latitude $\varphi=\arcsin(z/a)$, the
initial velocity is an eastward zonal jet of peak speed~$u_0$,
\begin{equation}
  \bm{u}_0 = \frac{u_0}{a}\,(-y,\,x,\,0) = u_0\cos\varphi\,\hat{\bm{\lambda}},
  \label{eq:w5-u}
\end{equation}
in geostrophic balance with the free-surface elevation
\begin{equation}
  \eta_0(\varphi) = -\frac{1}{g}\left(a\Omega u_0 + \tfrac12 u_0^2\right)\sin^2\varphi,
  \label{eq:w5-eta}
\end{equation}
and the conical mountain is
\begin{equation}
  b(\lambda,\varphi) = h_s\left(1 - \frac{r}{R}\right),
  \quad
  r = \min \left(R,\,\sqrt{(\varphi-\varphi_c)^2 + (\lambda-\lambda_c)^2}\right),
  \label{eq:w5-mtn}
\end{equation}
of height $h_s$ and base radius~$R$ centred at $(\lambda_c,\varphi_c)$, zero outside
$r=R$. The layer depth is $D_0 = H + \eta_0 - b$ with mean depth~$H$.
The sphere radius is $a = 6{,}371{,}220$\,m, the rotation rate
$\Omega = 7.292\times10^{-5}~\mathrm{s^{-1}}$ and
$g = 9.8~\mathrm{m\,s^{-2}}$. The jet speed is $u_0 = 20~\mathrm{m\,s^{-1}}$ and
the mean depth $H = 5960$\,m. The mountain has height $h_s = 2000$\,m and base
radius $R = \pi/9$, centred at $(\lambda_c,\varphi_c) = (-\pi/2,\,\pi/6)$.

Figure~\ref{fig:w5-energy-longtime} shows the relative energy drift over $15$ days,
which is minimal and entirely due to truncation of the iterative solvers after
a tolerance is reached.
Figure~\ref{fig:w5-div-longtime} shows the corresponding divergence: after the
initial adjustment it settles into a bounded oscillation, with no secular growth.


\begin{figure}
  \centering
  \includegraphics[width=0.65\textwidth]{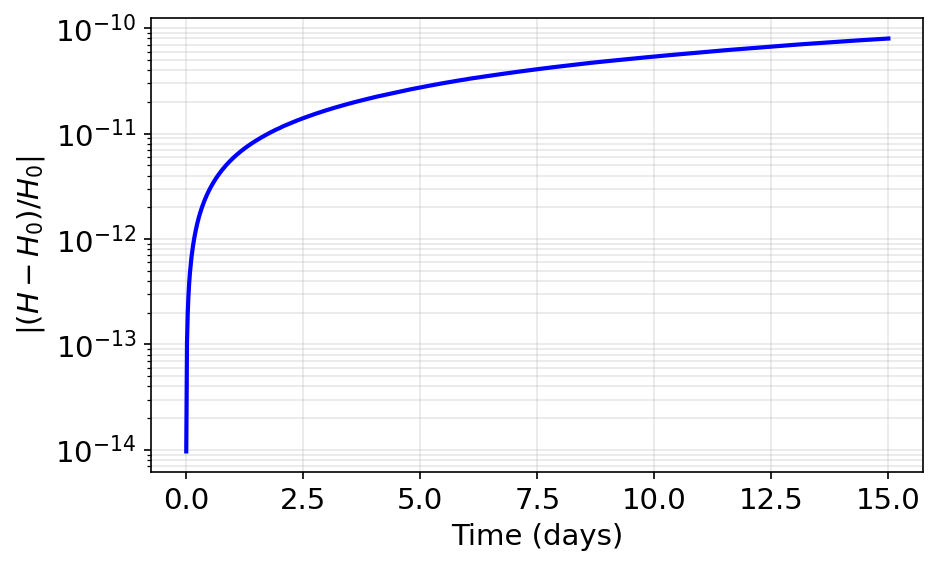}
  \caption{Long-time relative energy drift $|(H-H_0)/H_0|$ versus time for the mountain testcase.
    BDM$_2$/DG$_1$, degree $1$ in time, mesh refinement 5, $\Delta t = 135$\,s.}
  \label{fig:w5-energy-longtime}
\end{figure}

\begin{figure}
  \centering
  \includegraphics[width=0.65\textwidth]{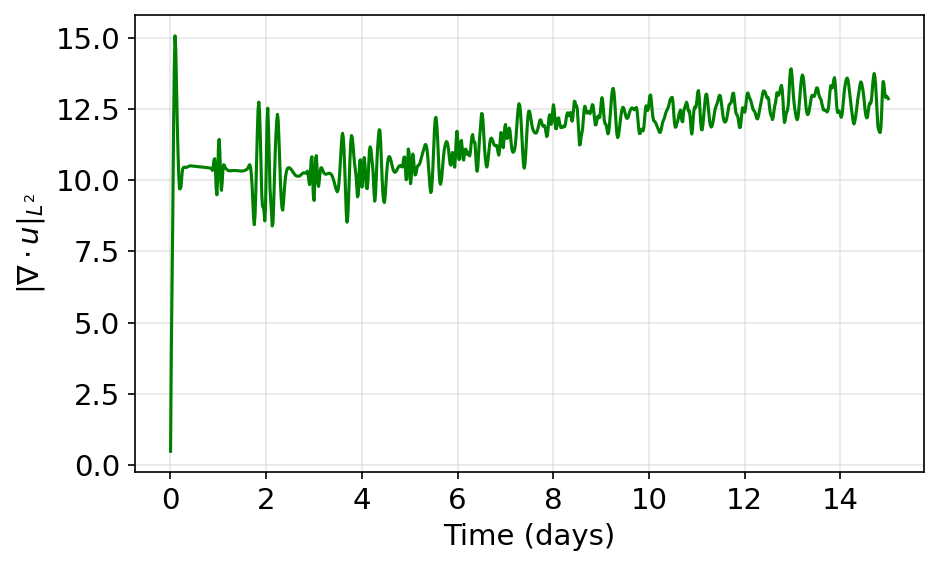}
  \caption{Velocity divergence $\|\nabla\cdot \bm{u}\|_{L^2}$ versus time for
    the run of Figure~\ref{fig:w5-energy-longtime}.}
  \label{fig:w5-div-longtime}
\end{figure}


\subsubsection{Flow over an isolated mountain: stochastic version}

We repeat the forward run with SFLT forcing. The energy
drift over $5$ days stays at round-off with no secular growth
(Figure~\ref{fig:w5-sflt-energy}), matching the deterministic run.  The noise raises the velocity divergence above the deterministic baseline
(Figure~\ref{fig:w5-sflt-div}), and the scheme runs stably with no filtering or
added dissipation. At larger amplitude, the SFLT forcing excites small scale eddies on the
lee  wave and Rossby wave train vorticity pattern while preserving the large scale
structure (Figure~\ref{fig:w5-compare-vort}).

\begin{figure}
  \centering
  \includegraphics[width=0.65\textwidth]{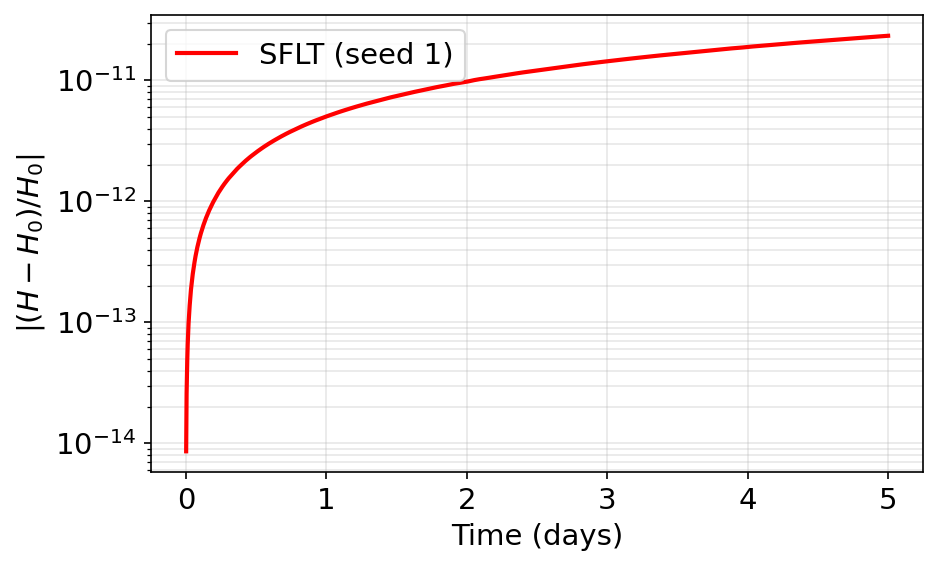}
  \caption{SFLT relative energy drift $|(H-H_0)/H_0|$ over $t=5$ days
    for the mountain testcase (seed~1). BDM$_2$/DG$_1$, mesh refinement 5, degree $1$ in time, $\Delta t = 135$\,s.}
  \label{fig:w5-sflt-energy}
\end{figure}
\begin{figure}
  \centering
  \includegraphics[width=0.65\textwidth]{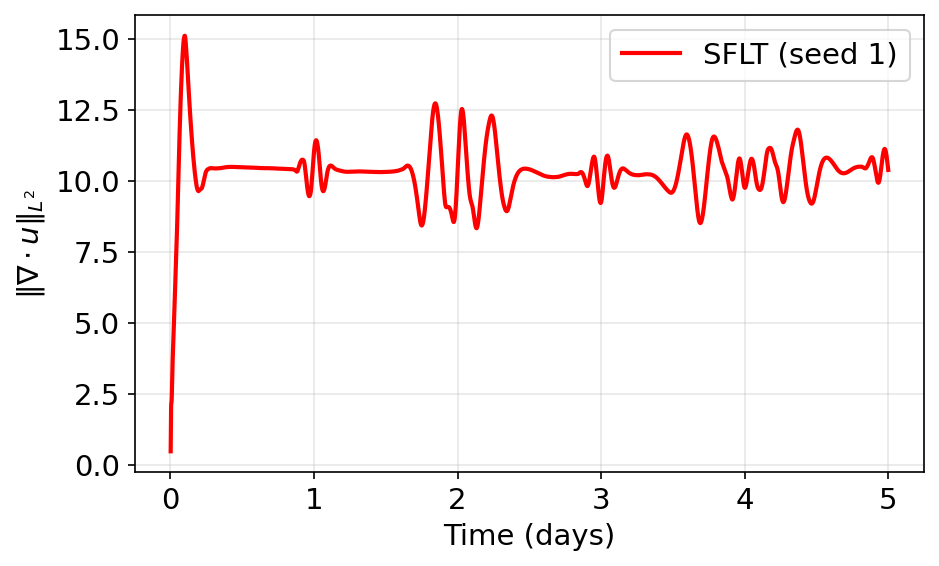}
  \caption{SFLT velocity divergence $\|\nabla\cdot \bm{u}\|_{L^2}$ for the run
    of Figure~\ref{fig:w5-sflt-energy}. BDM$_2$/DG$_1$, mesh refinement 5, degree $1$ in time,
    $\Delta t = 135$\,s.}
  \label{fig:w5-sflt-div}
\end{figure}

\begin{figure}
  \centering
  \includegraphics[width=\textwidth]{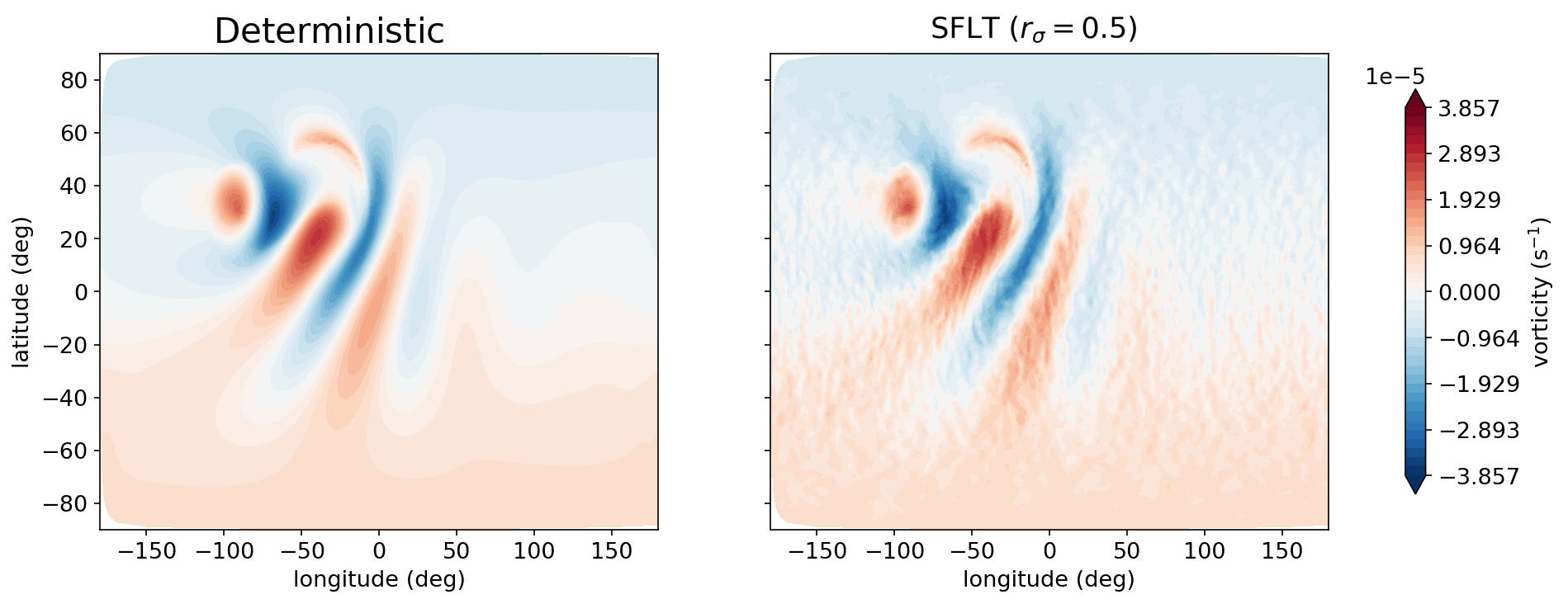}
  \caption{Vorticity at $t = 5$\,days, deterministic versus SFLT at strong
    forcing ($r_{\sigma} \approx 0.5$) for the mountain testcase, with the same colour scale for both images.
    BDM$_2$/DG$_1$, mesh refinement 5, degree $1$ in time.}
  \label{fig:w5-compare-vort}
\end{figure}

As for the Rossby--Haurwitz wave, we repeat the run with larger-scale
noise, $\lambda = 3\times10^{6}\,\mathrm{m}$, here at the stronger
level $r_\sigma \approx 2.0$ and for $9$ days
(Figures~\ref{fig:w5-lam3e6-day1}--\ref{fig:w5-lam3e6-day9}). The SFLT
forcing superimposes small-scale structure on the flow, while the lee
vortex and Rossby wave train of the deterministic run are
retained. The energy error of the SFLT run follows the deterministic
one throughout (Figure~\ref{fig:w5-r2sflt-energy}), and its divergence
fluctuates about the deterministic level without secular growth
(Figure~\ref{fig:w5-r2sflt-div}).

\begin{figure}
  \centering
  \includegraphics[width=\textwidth]{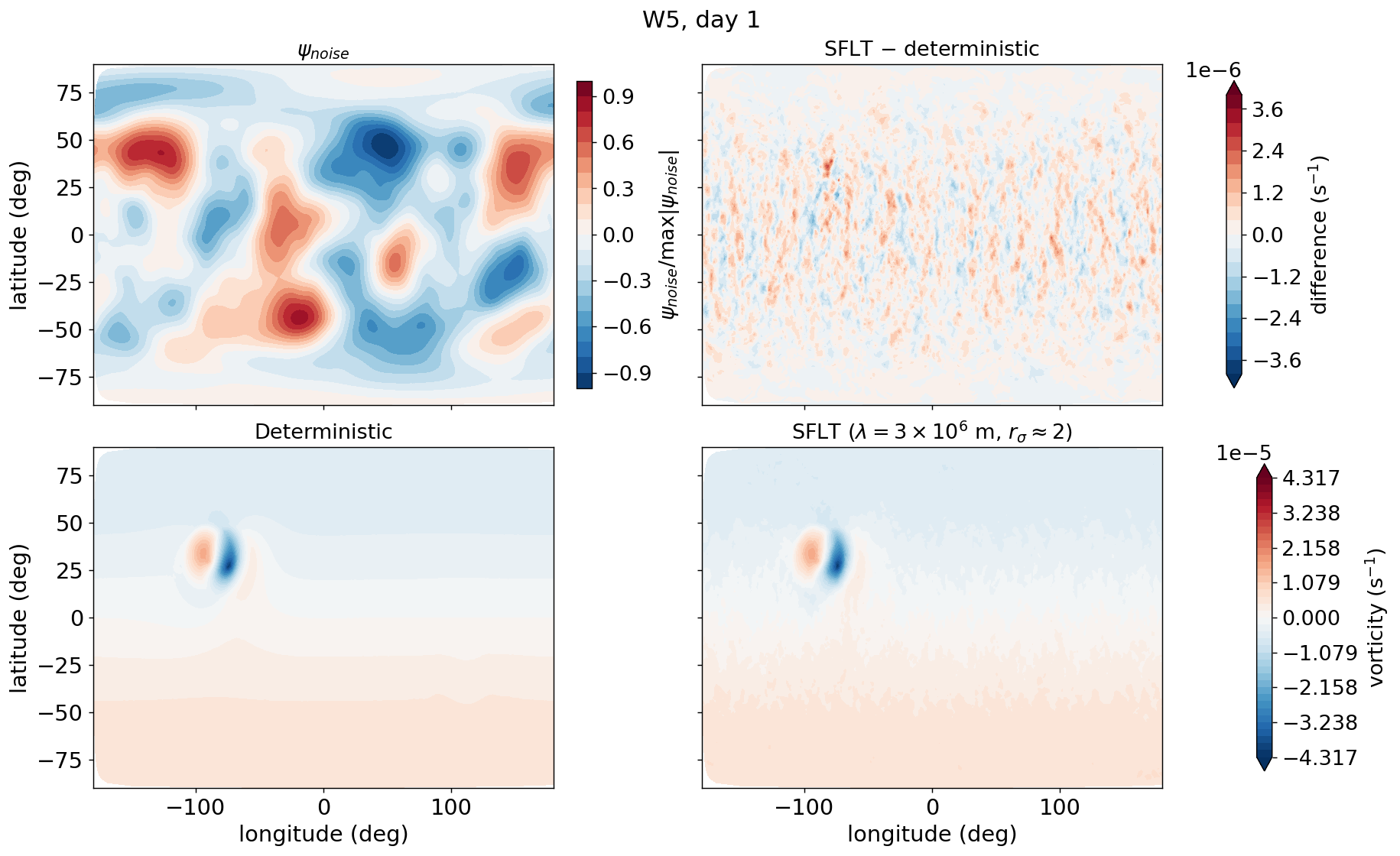}
 \caption{Day~$1$, $\lambda = 3\times10^{6}\,\mathrm{m}$, $r_\sigma \approx 2$.
   Top: $\psi_{\mathrm{noise}}$ (left) and SFLT minus deterministic vorticity
  (right) for the mountain testcase. Bottom: deterministic (left) and SFLT (right) vorticity.}

  \label{fig:w5-lam3e6-day1}
\end{figure}

\begin{figure}
  \centering
  \includegraphics[width=\textwidth]{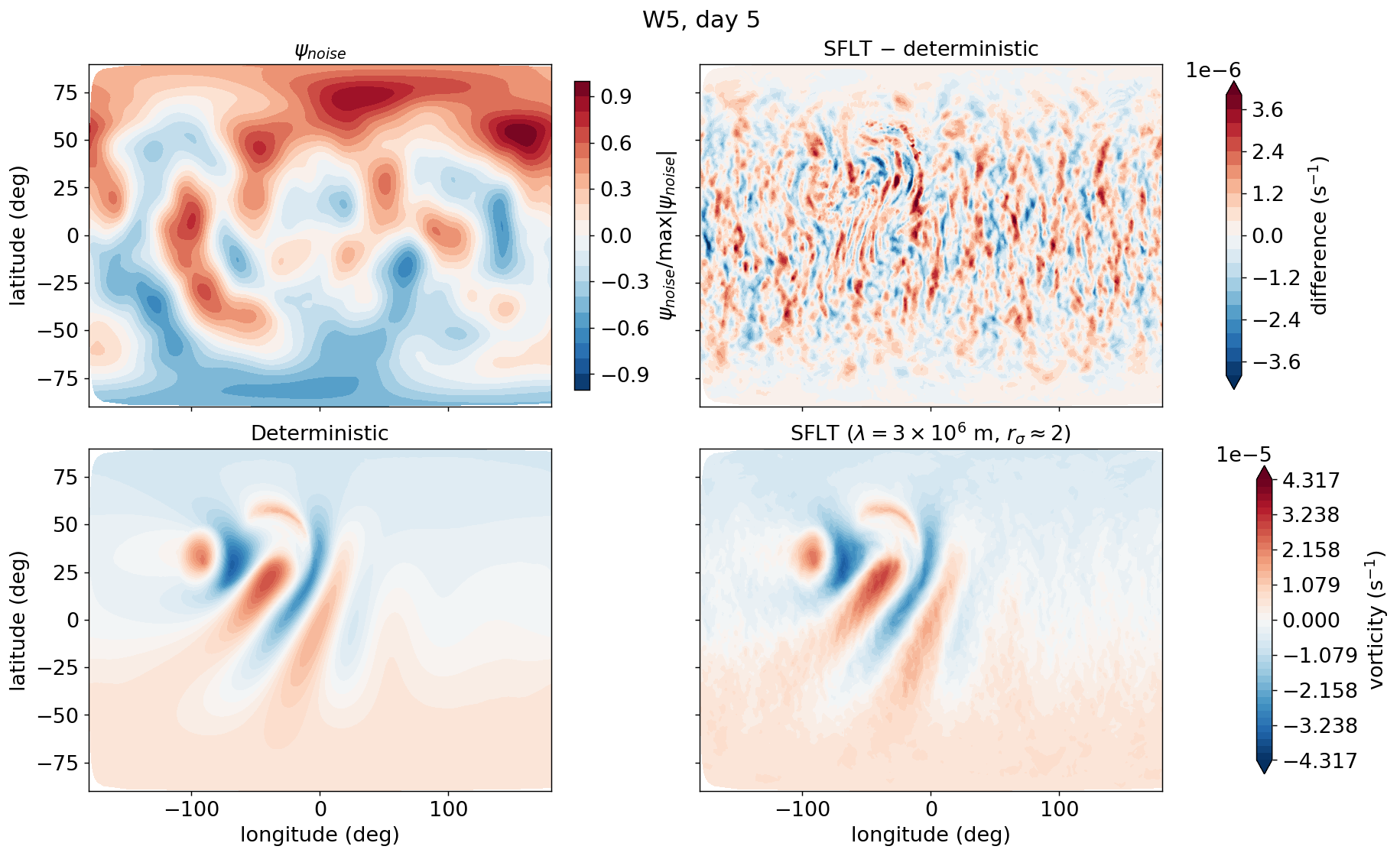}
  \caption{As Figure~\ref{fig:w5-lam3e6-day1}, day~$5$.}
  \label{fig:w5-lam3e6-day5}
\end{figure}

\begin{figure}
  \centering
  \includegraphics[width=\textwidth]{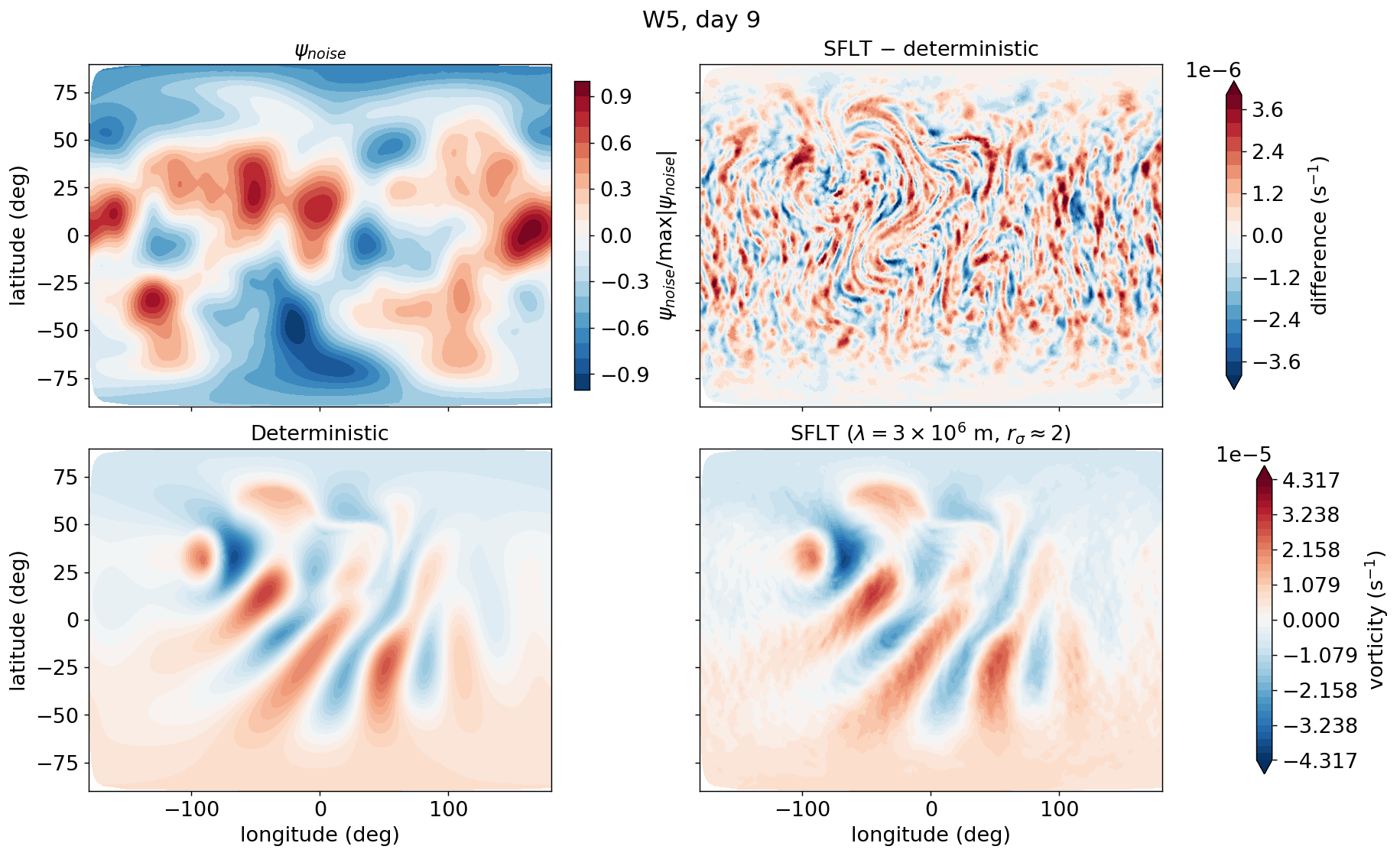}
  \caption{As Figure~\ref{fig:w5-lam3e6-day1}, day~$9$.}
  \label{fig:w5-lam3e6-day9}
\end{figure}

\begin{figure}
  \centering
  \includegraphics[width=0.65\textwidth]{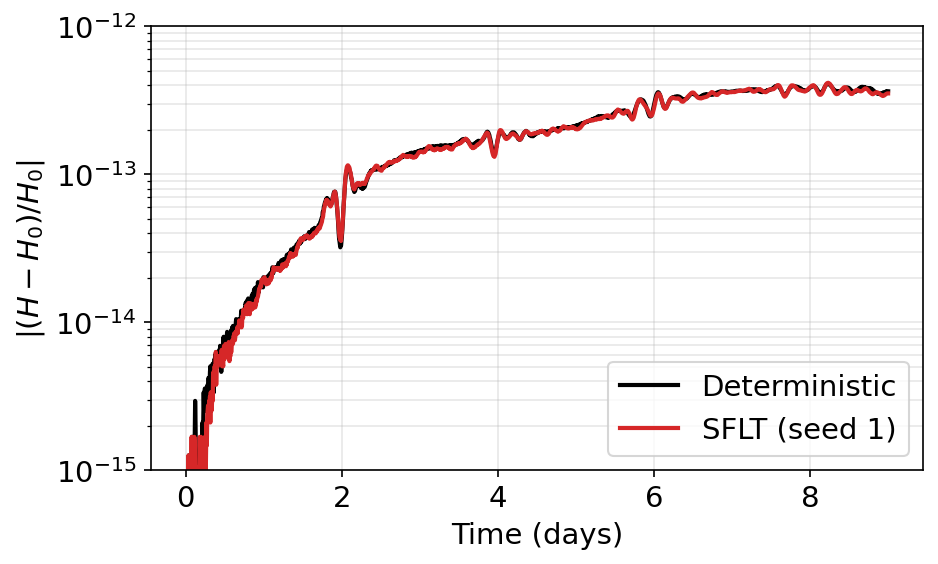}
  \caption{Relative energy error $|(H-H_0)/H_0|$, deterministic and SFLT
    (seed~1), for the run of Figure~\ref{fig:w5-lam3e6-day1}.
    $\Delta t = 432$\,s.}
  \label{fig:w5-r2sflt-energy}
\end{figure}

\begin{figure}
  \centering
  \includegraphics[width=0.65\textwidth]{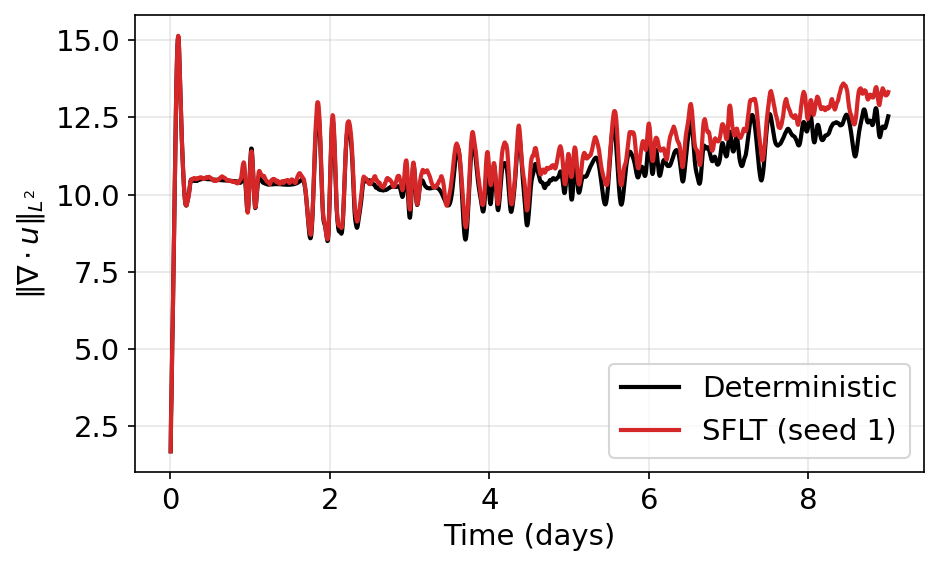}
  \caption{Velocity divergence $\|\nabla\cdot \bm{u}\|_{L^2}$ for the runs of
    Figure~\ref{fig:w5-r2sflt-energy}.}
  \label{fig:w5-r2sflt-div}
\end{figure}

\section{Conclusions and outlook}\label{sec:discussion}
In this paper we combined a compatible finite element discretisation
of the rotating shallow water equations with a continuous
Petrov--Galerkin discretisation in time. The scheme conserves energy
exactly, by the antisymmetry of the discrete bracket, provided the
time integrals are evaluated exactly. We extended this construction to
the rotating shallow water equations with SFLT transport noise, which
enters through the same antisymmetric bracket, so the energy identity
holds pathwise. We provided a robust and scalable iterative solver
strategy based upon a monolithic additive Schwarz method using star
patches. The numerical results confirm the expected convergence
orders and energy conservation up to iterative solver truncation,
independent of the noise amplitude and of the realisation.
These results also give an initial qualitative look at the behaviour of
SFLT applied to the rotating shallow water equations on the sphere.
The noise injects momentum into the solution, leading to local
vortex structures, whilst conserving energy. The purpose of this
noise is to model the effects of small scale unresolved motion on
the resolved dynamics, but we leave the problem of estimating
the statistics of the noise process $\bf{B}$ to future work.

\paragraph{Acknowledgements.} The authors acknowledge funding from EPSRC grant EP/W016125/1 ``Next generation
particle filters for stochastic partial differential equations'', and
the European Research Council Synergy grant ``Stochastic Transport in
Upper Ocean Dynamics''. We are grateful to Darryl Holm for supporting
this work.

\bibliography{epac_stoch}

\begin{thebibliography}{10}

\bibitem{andrews2026irksome}
B.~D. Andrews, P.~Brubeck, P.~E. Farrell, R.~C. Kirby, and S.~P. MacLachlan.
\newblock Automated {G}alerkin time stepping in {Irksome}.
\newblock {\em arXiv preprint arXiv: 2606.27300}, 2026.

\bibitem{andrew_farrell_sisc25}
B.~D. Andrews and P.~E. Farrell.
\newblock Enforcing conservation laws and dissipation inequalities numerically
  via auxiliary variables.
\newblock {\em SIAM J. Sci. Comput.}, 47(6):A3516--A3535, 2025.

\bibitem{arakawa1990energy}
A.~Arakawa and Y.-J.~G. Hsu.
\newblock Energy conserving and potential-enstrophy dissipating schemes for the
  shallow water equations.
\newblock {\em Mon. Weather Rev.}, 118(10):1960--1969, 1990.

\bibitem{arakawa1981potential}
A.~Arakawa and V.~R. Lamb.
\newblock A potential enstrophy and energy conserving scheme for the shallow
  water equations.
\newblock {\em Mon. Weather Rev.}, 109(1):18--36, 1981.

\bibitem{bauer2018energy}
W.~Bauer and C.~J. Cotter.
\newblock Energy--enstrophy conserving compatible finite element schemes for
  the rotating shallow water equations with slip boundary conditions.
\newblock {\em J. Comput. Phys.}, 373:171--187, 2018.

\bibitem{bauer2025fully}
W.~Bauer and C.~J. Cotter.
\newblock Fully implicit timestepping methods for the rotating shallow water
  equations.
\newblock {\em arXiv preprint arXiv:2508.02358}, 2025.

\bibitem{brecht_JAMES}
R.~Brecht, L.~Li, W.~Bauer, and E.~Mémin.
\newblock Rotating shallow water flow under location uncertainty with a
  structure-preserving discretization.
\newblock {\em J. Adv. Model. Earth Syst.}, 13(12):e2021MS002492, 2021.

\bibitem{cohen2011linear}
D.~Cohen and E.~Hairer.
\newblock Linear energy-preserving integrators for poisson systems.
\newblock {\em BIT Numer. Math.}, 51(1):91--101, 2011.

\bibitem{cotter2019numerically}
C.~Cotter, D.~Crisan, D.~D. Holm, W.~Pan, and I.~Shevchenko.
\newblock Numerically modeling stochastic {L}ie transport in fluid dynamics.
\newblock {\em Multiscale Model. Simul.}, 17(1):192--232, 2019.

\bibitem{cotter2023compatible}
C.~J. Cotter.
\newblock Compatible finite element methods for geophysical fluid dynamics.
\newblock {\em Acta Numer.}, 32:291--393, 2023.

\bibitem{CotterShipton2023}
C.~J. Cotter and J.~Shipton.
\newblock A compatible finite element discretisation for the nonhydrostatic
  vertical slice equations.
\newblock {\em GEM Int. J. Geomath.}, 14(1):25, 2023.

\bibitem{cullen2007modelling}
M.~Cullen.
\newblock Modelling atmospheric flows.
\newblock {\em Acta Numer.}, 16:67--154, 2007.

\bibitem{dubos2014equations}
T.~Dubos and M.~Tort.
\newblock Equations of atmospheric motion in non-eulerian vertical coordinates:
  Vector-invariant form and quasi-{H}amiltonian formulation.
\newblock {\em Mon. Weather Rev.}, 142(10):3860--3880, 2014.

\bibitem{egger2021energy}
H.~Egger, O.~Habrich, and V.~Shashkov.
\newblock On the energy stable approximation of {H}amiltonian and gradient
  systems.
\newblock {\em Comput. Methods Appl. Math.}, 21(2):335--349, 2021.

\bibitem{eidnes2022order}
S{\o}lve. Eidnes.
\newblock Order theory for discrete gradient methods.
\newblock {\em BIT Numer. Math.}, 62(4):1207--1255, 2022.

\bibitem{eldred2017total}
C.~Eldred and D.~Randall.
\newblock Total energy and potential enstrophy conserving schemes for the
  shallow water equations using {H}amiltonian methods -- {P}art 1: {D}erivation
  and properties.
\newblock {\em Geosci. Model Dev.}, 10(2):791--810, 2017.

\bibitem{eldred2019quasi}
Christopher Eldred, Thomas Dubos, and Evaggelos Kritsikis.
\newblock A quasi-hamiltonian discretization of the thermal shallow water
  equations.
\newblock {\em Journal of Computational Physics}, 379:1--31, 2019.

\bibitem{ern2021}
A.~Ern and J.-L. Guermond.
\newblock {\em Finite Elements III : First-Order and TimeDependent PDEs}.
\newblock Springer, 2021.

\bibitem{farrell2021irksome}
P.~E. Farrell, R.~C. Kirby, and J.~Marchena-Menendez.
\newblock {I}rksome: Automating {R}unge--{K}utta time-stepping for finite
  element methods.
\newblock {\em ACM Trans. Math. Softw.}, 47(4):1--26, 2021.

\bibitem{Farrell2021}
P.~E. Farrell, M.~G. Knepley, L.~Mitchell, and F.~Wechsung.
\newblock {PCPATCH}: software for the topological construction of multigrid
  relaxation methods.
\newblock {\em ACM Trans. Math. Softw.}, 47(3):1--22, 2021.

\bibitem{gassmann2013global}
A.~Gassmann.
\newblock A global hexagonal {C}-grid non-hydrostatic dynamical core
  ({ICON-IAP}) designed for energetic consistency.
\newblock {\em Q. J. R. Meteorol. Soc.}, 139(670):152--175, 2013.

\bibitem{gassmann2008towards}
A.~Gassmann and H.-J. Herzog.
\newblock Towards a consistent numerical compressible non-hydrostatic model
  using generalized {H}amiltonian tools.
\newblock {\em Q. J. R. Meteorol. Soc.}, 134(635):1597--1613, 2008.

\bibitem{gawlik2021variational}
E.~S. Gawlik and Fran{\c{c}}ois. Gay-Balmaz.
\newblock A variational finite element discretization of compressible flow.
\newblock {\em Found. Comput. Math.}, 21(4):961--1001, 2021.

\bibitem{gawlik2011geometric}
E.~S. Gawlik, P.~Mullen, D.~Pavlov, J.~E. Marsden, and M.~Desbrun.
\newblock Geometric, variational discretization of continuum theories.
\newblock {\em Physica D}, 240(21):1724--1760, 2011.

\bibitem{giesselmann2025energy}
J.~Giesselmann, A.~Karsai, and T.~Tscherpel.
\newblock Energy-consistent {P}etrov--{G}alerkin time discretization of
  port-{H}amiltonian systems.
\newblock {\em SMAI J. Comput. Math.}, 11:335--367, 2025.

\bibitem{gonzalez1996time}
O.~Gonzalez.
\newblock Time integration and discrete {H}amiltonian systems.
\newblock {\em J. Nonlinear Sci.}, 6(5):449--467, 1996.

\bibitem{hairer2010energy}
E.~Hairer.
\newblock Energy-preserving variant of collocation methods.
\newblock {\em JNAIAM}, 5(1-2):73--84, 2010.

\bibitem{hairer1999numerical}
E.~Hairer, C.~Lubich, and G.~Wanner.
\newblock Numerical geometric integration.
\newblock {\em Unpublished lecture notes}, 1999.

\bibitem{hirani}
A.~N. Hirani.
\newblock {\em Discrete Exterior Calculus}.
\newblock PhD thesis, California Institute of Technology, 2003.

\bibitem{holm2015variational}
D.~D. Holm.
\newblock Variational principles for stochastic fluid dynamics.
\newblock {\em Proc. R. Soc. A: Math. Phys. Eng. Sci.}, 471(2176):20140963,
  2015.

\bibitem{holm2026geometry}
D.~D. Holm.
\newblock The geometry of stochastic fluid dynamics.
\newblock {\em arXiv preprint arXiv: 2608.15321}, 2026.

\bibitem{holm2021stochastic}
D.~D. Holm and R.~Hu.
\newblock Stochastic effects of waves on currents in the ocean mixed layer.
\newblock {\em J. Math. Phys.}, 62(7), 2021.

\bibitem{holm1998euler}
D.~D. Holm, J.~E. Marsden, and T.~S. Ratiu.
\newblock The {Euler--P}oincar{\'e} equations and semidirect products with
  applications to continuum theories.
\newblock {\em Adv. Math.}, 137(1):1--81, 1998.

\bibitem{holm1999euler}
D.~D. Holm, J.~E. Marsden, and T.~S. Ratiu.
\newblock The {Euler-P}oincar{\'e} equations in geophysical fluid dynamics.
\newblock {\em arXiv preprint chao-dyn/9903035}, 1999.

\bibitem{holmsinghsteet2026}
D.~D. Holm, M.~K. Singh, and O.~D. Street.
\newblock A comparative numerical study of stochastic {H}amiltonian
  {Camassa–Holm} equations.
\newblock {\em Proc. R. Soc. A: Math. Phys. Eng. Sci.}, 482(2330):20250635,
  2026.

\bibitem{hupatching2023variational}
R.~Hu and S.~Patching.
\newblock Variational stochastic parameterisations and their applications to
  primitive equation models.
\newblock In B.~Chapron, D.~Crisan, D.~Holm, E.~M{\'e}min, and A.~Radomska,
  editors, {\em Stochastic Transport in Upper Ocean Dynamics}, pages 135--158.
  Springer International Publishing, Cham, 2023.

\bibitem{jablonowski2011pros}
C.~Jablonowski and D.~L. Williamson.
\newblock The pros and cons of diffusion, filters and fixers in atmospheric
  general circulation models.
\newblock {\em Numerical Techniques for Global Atmospheric Models}, pages
  381--493, 2011.

\bibitem{kloeden92}
P.E. Kloeden and E.~Platen.
\newblock {\em Numerical Solution of Stochastic Differential Equations}.
\newblock Springer, 1 edition, 1992.

\bibitem{lauter2005unsteady}
M.~L{\"a}uter, D.~Handorf, and K.~Dethloff.
\newblock Unsteady analytical solutions of the spherical shallow water
  equations.
\newblock {\em J. Comput. Phys.}, 210(2):535--553, 2005.

\bibitem{lee2018mixed}
D.~Lee and A.~Palha.
\newblock A mixed mimetic spectral element model of the rotating shallow water
  equations on the cubed sphere.
\newblock {\em J. Comput. Phys.}, 375:240--262, 2018.

\bibitem{lee2020mixed}
D.~Lee and A.~Palha.
\newblock A mixed mimetic spectral element model of the 3{D} compressible euler
  equations on the cubed sphere.
\newblock {\em J. Comput. Phys.}, 401:108993, 2020.

\bibitem{lee2018discrete}
D.~Lee, A.~Palha, and M.~Gerritsma.
\newblock Discrete conservation properties for shallow water flows using mixed
  mimetic spectral elements.
\newblock {\em J. Comput. Phys.}, 357:282--304, 2018.

\bibitem{leimkuhler2004simulating}
B.~Leimkuhler and S.~Reich.
\newblock {\em Simulating {H}amiltonian dynamics}, volume~14.
\newblock Cambridge University Press, 2004.

\bibitem{mclachlan1999geometric}
R.~I. McLachlan, G.~R.~W. Quispel, and N.~Robidoux.
\newblock Geometric integration using discrete gradients.
\newblock {\em Philos. Trans. R. Soc. A}, 357(1754):1021--1045, 1999.

\bibitem{mclaren2003bootstrapping}
D.~I. McLaren and G.~R.~W. Quispel.
\newblock Bootstrapping discrete-gradient integral-preserving integrators to
  fourth order.
\newblock {\em Nonlinear Dyn.}, pages 157--172, 2003.

\bibitem{mclaren2004integral}
D.~I. McLaren and G.~R.~W. Quispel.
\newblock Integral-preserving integrators.
\newblock {\em J. Phys. A}, 37(39):L489--L495, 2004.

\bibitem{mcrae2014energy}
A.~T.~T. McRae and C.~J. Cotter.
\newblock Energy-and enstrophy-conserving schemes for the shallow-water
  equations, based on mimetic finite elements.
\newblock {\em Q. J. R. Meteorol. Soc.}, 140(684):2223--2234, 2014.

\bibitem{memin2014fluid}
E.~M\'emin.
\newblock Fluid flow dynamics under location uncertainty.
\newblock {\em Geophys. Astrophys. Fluid Dyn.}, 108(2):119--146, 2014.

\bibitem{morrison1998}
P.~J. Morrison.
\newblock {H}amiltonian description of the ideal fluid.
\newblock {\em Rev. Mod. Phys.}, 70(2):467, 1998.

\bibitem{morrison1981hamiltonian}
P.~J. Morrison et~al.
\newblock Hamiltonian field description of the one-dimensional {Poisson-Vlasov}
  equations.
\newblock Technical report, Princeton Univ., NJ (USA). Plasma Physics Lab,
  1981.

\bibitem{natale2017scale}
A.~Natale and C.~J. Cotter.
\newblock Scale-selective dissipation in energy-conserving finite-element
  schemes for two-dimensional turbulence.
\newblock {\em Q. J. R. Meteorol. Soc.}, 143(705):1734--1745, 2017.

\bibitem{natale2018variational}
A.~Natale and C.~J. Cotter.
\newblock A variational finite-element discretization approach for perfect
  incompressible fluids.
\newblock {\em IMA J. Numer. Anal.}, 38(3):1388--1419, 2018.

\bibitem{nevir2009energy}
P.~N{\'e}vir and M.~Sommer.
\newblock Energy--vorticity theory of ideal fluid mechanics.
\newblock {\em J. Atmos. Sci.}, 66(7):2073--2084, 2009.

\bibitem{norton2013projection}
R.~A. Norton, D.~I. McLaren, G.~R.~W. Quispel, A.~Stern, and A.~Zanna.
\newblock Projection methods and discrete gradient methods for preserving first
  integrals of odes.
\newblock {\em arXiv preprint arXiv:1302.2713}, 2013.

\bibitem{pavlov2011structure}
D.~Pavlov, P.~Mullen, Y.~Tong, E.~Kanso, J.~E. Marsden, and M.~Desbrun.
\newblock Structure-preserving discretization of incompressible fluids.
\newblock {\em Physica D}, 240(6):443--458, 2011.

\bibitem{quispel2008new}
G.~R.~W. Quispel and D.~I. McLaren.
\newblock A new class of energy-preserving numerical integration methods.
\newblock {\em J. Phys. A}, 41(4):045206, 2008.

\bibitem{Amin2025}
A.~Rafiei and S.~MacLachlan.
\newblock Achieving h‐and p‐robust monolithic multigrid solvers for the
  {S}tokes equations.
\newblock {\em Numer. Linear Algebra Appl.}, 32(3), 2025.

\bibitem{resseguier2017geophysical}
V.~Resseguier, E.~M\'emin, and B.~Chapron.
\newblock Geophysical flows under location uncertainty, {P}art {I}: Random
  transport and general models.
\newblock {\em Geophys. Astrophys. Fluid Dyn.}, 111(3):149--176, 2017.

\bibitem{ringler2010unified}
T.~D. Ringler, J.~Thuburn, J.~B. Klemp, and W.~C. Skamarock.
\newblock A unified approach to energy conservation and potential vorticity
  dynamics for arbitrarily-structured {C}-grids.
\newblock {\em J. Comput. Phys.}, 229(9):3065--3090, 2010.

\bibitem{sadourny1985parameterization}
R.~Sadourny and C.~Basdevant.
\newblock Parameterization of subgrid scale barotropic and baroclinic eddies in
  quasi-geostrophic models: Anticipated potential vorticity method.
\newblock {\em J. Atmos. Sci.}, 42(13):1353--1363, 1985.

\bibitem{salmon2004poisson}
R.~Salmon.
\newblock Poisson-bracket approach to the construction of energy-and
  potential-enstrophy-conserving algorithms for the shallow-water equations.
\newblock {\em J. Atmos. Sci.}, 61(16):2016--2036, 2004.

\bibitem{salmon2005general}
R.~Salmon.
\newblock A general method for conserving quantities related to potential
  vorticity in numerical models.
\newblock {\em Nonlinearity}, 18(5):R1--R16, 2005.

\bibitem{schieweck2010stable}
F.~Schieweck.
\newblock A-stable discontinuous {Galerkin--Petrov} time discretization of
  higher order.
\newblock {\em J. Numer. Math.}, 18(1):25, 2010.

\bibitem{sommer2009conservative}
M.~Sommer and P.~N{\'e}vir.
\newblock A conservative scheme for the shallow-water system on a staggered
  geodesic grid based on a nambu representation.
\newblock {\em Q. J. R. Meteorol. Soc.}, 135(639):485--494, 2009.

\bibitem{thuburn2012framework}
J.~Thuburn and C.~J. Cotter.
\newblock A framework for mimetic discretization of the rotating shallow-water
  equations on arbitrary polygonal grids.
\newblock {\em SIAM J. Sci. Comput.}, 34(3):B203--B225, 2012.

\bibitem{thuburn2014mimetic}
J.~Thuburn, C.~J. Cotter, and T.~Dubos.
\newblock A mimetic, semi-implicit, forward-in-time, finite volume shallow
  water model: comparison of hexagonal--icosahedral and cubed-sphere grids.
\newblock {\em Geosci. Model Dev.}, 7(3):909--929, 2014.

\bibitem{thuburn2014cascades}
J.~Thuburn, J.~Kent, and N.~Wood.
\newblock Cascades, backscatter and conservation in numerical models of
  two-dimensional turbulence.
\newblock {\em Q. J. R. Meteorol. Soc.}, 140(679):626--638, 2014.

\bibitem{thuburn2009numerical}
J.~Thuburn, T.~D. Ringler, W.~C. Skamarock, and J.~B. Klemp.
\newblock Numerical representation of geostrophic modes on arbitrarily
  structured {C}-grids.
\newblock {\em J. Comput. Phys.}, 228(22):8321--8335, 2009.

\bibitem{williamson1992standard}
D.~L. Williamson, J.~B. Drake, J.~J. Hack, R.~Jakob, and P.~N. Swarztrauber.
\newblock A standard test set for numerical approximations to the shallow water
  equations in spherical geometry.
\newblock {\em J. Comput. Phys.}, 102(1):211--224, 1992.

\bibitem{wimmer2020energy}
G.~A. Wimmer, C.~J. Cotter, and W.~Bauer.
\newblock Energy conserving upwinded compatible finite element schemes for the
  rotating shallow water equations.
\newblock {\em J. Comput. Phys.}, 401:109016, 2020.

\bibitem{wimmer2021energy}
G.~A. Wimmer, C.~J. Cotter, and W.~Bauer.
\newblock Energy conserving {SUPG} methods for compatible finite element
  schemes in numerical weather prediction.
\newblock {\em SMAI J. Comput. Math.}, 7:267--300, 2021.

\bibitem{zeitlin1991finite}
V.~Zeitlin.
\newblock Finite-mode analogs of 2{D} ideal hydrodynamics: Coadjoint orbits and
  local canonical structure.
\newblock {\em Physica D}, 49(3):353--362, 1991.

\bibitem{zeitlin2004self}
V.~Zeitlin.
\newblock Self-consistent finite-mode approximations for the hydrodynamics of
  an incompressible fluid on nonrotating and rotating spheres.
\newblock {\em Phys. Rev. Lett.}, 93(26):264501, 2004.

\end{thebibliography}

\appendix
\section{Definition and discretisation of the stochastic vector field}
\label{A:B}
In \cite{holm2021stochastic}, the stochastic vector fields are
constructed using a basis expansion; the idea is that the basis would
be constructed from data.  In this article, our focus is on the energy
conserving discretisation, so to keep things simple we just describe
$\bm{B}$ as a stochastic process in space and time. In particular, 
for our numerical experiments we choose $\diff\bm{B}=\sigma\nabla^\perp \diff\psi$,
for a chosen scaling parameter $\sigma$.
Here $\psi$ is a Gaussian process in space and a Stratonovich
process in time (the latter being a requirement of the SFLT framework),
defined as the solution of 
\begin{equation}
  \left( \mathcal{I} - \kappa^{-2}\Delta \right)^{m} \diff\psi = \,\diff W,
  \qquad \kappa = \frac{\sqrt{8\nu}}{\lambda},
  \label{eq:noise-spde}
\end{equation}
where ${W}$ is a Gaussian process that is white in space and
Stratonovich in time, $\eta$ is a scaling factor, $\nu$ is the
smoothness parameter, $\lambda$ is the correlation length, and $m$ the
number of applications of the inverse Helmholtz operator.
On the sphere, $\Delta$ is the Laplace-Beltrami operator.
In our numerical simulations,  we have taken $m=2$ and $\nu = 1$. 

In our implementation, we discretise the white noise process
increments
\begin{equation}
  \delta W^{n}(x) = \int_{t_n}^{t_{n+1}}\diff W(x,t)
  = \sum_{e=1}^{N_e} \delta W_{n,e}|e|^{1/2}\chi_e(x),
\end{equation}
where $\chi_e$ is the indicator function for cell $e$ for
$e=1,\ldots,N_e$ with areas $|e|$, $N_e$ is the number of cells, and
$\delta W_{n,e}$ are i.i.d. $N(0,\sqrt{\Delta t})$ random
variables. For a continuous finite element space $\Psi$, we discretise
\eqref{eq:noise-spde} by seeking $\psi_1,\dots,\psi_m \in \Psi$ with
\begin{align}
  \left\langle \psi_1, \chi \right\rangle
    + \kappa^{-2}\left\langle \nabla \delta \psi_1, \nabla \chi \right\rangle
    &= \left\langle \delta {W}_h, \chi \right\rangle ,
    &&\forall \chi \in \Psi, \label{eq:noise-solve1}\\
  \left\langle \delta \psi_{j+1}, \chi \right\rangle
    + \kappa^{-2}\left\langle \nabla \delta \psi_{j+1}, \nabla \chi \right\rangle
    &= \left\langle \delta \psi_j, \chi \right\rangle ,
    &&\forall \chi \in \Psi, \quad j = 1,\dots,m-1, \label{eq:noise-solve2}
\end{align}
and finally set $\delta {\bm{B}} = \delta\psi_m$. In our implementation,
we took $\Psi$ to be the continuous quadratic finite element space,
so that $\delta \bm{B} \in V$.

To report the strength of the forcing, we define the noise ratio,
\begin{equation}
  r_\sigma \;:=\; \frac{\big\| \delta{\bm{B}} / D \big\|_{L^2}}
    {\big\| \bm{u} \big\|_{L^2}},
  \label{eq:noise-level}
\end{equation}
evaluated at the initial condition.

\end{document}